\numberwithin{equation}{section}
\numberwithin{figure}{section}
\theoremstyle{plain}
\newtheorem{thm}{\protect\theoremname}[section]
  \theoremstyle{definition}
  \newtheorem{defn}[thm]{\protect\definitionname}
  \theoremstyle{plain}
  \newtheorem{lem}[thm]{\protect\lemmaname}
  \theoremstyle{remark}
  \theoremstyle{plain}
  \newtheorem{prop}[thm]{\protect\propositionname}
  \theoremstyle{plain}
  \newtheorem{cor}[thm]{\protect\corollaryname}
\newtheorem{theorem}{Theorem}[section]
\theoremstyle{plain}
\newtheorem{remark}[theorem]{Remark}
\newcommand{\eqdef}{\stackrel{\mathrm{def}}{=}}
\newcommand{\N}{\mathbb{N}}
\newcommand{\R}{\mathbb{R}}
\newcommand{\Z}{\mathbb{Z}}
\newcommand{\wlim}{\operatorname{w-lim}}
  \providecommand{\corollaryname}{Corollary}
  \providecommand{\definitionname}{Definition}
  \providecommand{\lemmaname}{Lemma}
  \providecommand{\propositionname}{Proposition}
  \providecommand{\remarkname}{Remark}
\providecommand{\theoremname}{Theorem}
\begin{document}

\title[Profile decomposition]{Profile decomposition of Struwe-Solimini for manifolds with bounded geometry}

\author{Kunnath Sandeep}

\address{TIFR Centre for Applicable Mathematics, Post Bag no. 6503,Sharada nagar,Yelahanka New Town, Bangalore-560065,India.}

\email{sandeep@math.tifrbng.res.in}

\author{Cyril Tintarev}

\thanks{One of the authors (C.T.) expresses his gratitude to Professor Sandeep
and the faculty of TIFR-CAM for their warm hospitality.}

\address{Sankt Olofsgatan 66B, 75 330 Uppsala, Sweden}

\email{tammouz@gmail.com}

\begin{abstract}
	For many known non-compact embeddings of two Banach
	spaces $E\hookrightarrow F$, every bounded sequence  in $E$  has a subsequence that takes form of  a \emph{profile decomposition} - a sum of clearly structured terms with asymptotically disjoint supports plus a remainder that vanishes in the norm of $F$. 
	In this paper we construct a profile decomposition for arbitrary sequences in the Sobolev space $H^{1,2}(M)$ of a Riemannian manifold with bounded geometry, relative to the embedding of $H^{1,2}(M)$ into $L^{2^*}(M)$, generalizing the well-known profile decomposition of Struwe \cite[Proposition 2.1]{Struwe} to the case of any bounded sequence and a non-compact manifold.
\end{abstract}
\keywords{Concentration compactness, profile decompositions, multiscale analysis}
\subjclass[2010]{46E35, 46B50, 58J99, 35B44, 35A25}
\maketitle

\section{Introduction}

\emph{Defect of compactness}, relative to an embedding of two Banach
spaces $E\hookrightarrow F$, is a difference $u_{k}-u$ between a
weakly convergent sequence $u_{k}\rightharpoonup u$ in $E$ and its
limit, taken up to a remainder that vanishes in the norm of $F$.
In particular, if the embedding is compact and $E$ is reflexive,
the defect of compactness is null. For many embeddings there exist
well-structured descriptions of the defect of compactness known
as \emph{profile decompositions}. In particular, profile decompositions
relative to Sobolev embeddings are sums of terms with asymptotically
disjoint supports, which are called \emph{elementary concentrations}
or \emph{bubbles. }Profile decompositions were motivated by studies
of concentration phenomena in the early 1980's by Uhlenbeck, Brezis,
Coron, Nirenberg, Aubin and Lions, and they play significant role
in verification of convergence of functional sequences in applied
analysis, particularly when the information available via the classical
concentration compactness method is not enough detailed. 

Profile decompositions are known when the embedding $E\hookrightarrow F$
is \emph{cocompact} relative to some group $\mathcal{G}$ of bijective
isometries on $E$ (embedding $E\hookrightarrow F$ is called $\mathcal{G}$
-cocompact if any sequence $(u_{k})$ in $E$ satisfying $g_{k}u_{k}\rightharpoonup0$
for any sequence of operators $(g_{k})$ in $\mathcal{G}$ vanishes
in the norm of $F$). It is easy to show (the example is due to Jaffard, \cite{Jaffard}) that $\ell^{\infty}(\Z)$
is cocompactly embedded into itself relative to the group of shifts
$\mathcal{G}=\lbrace(a_{n})\mapsto(a_{n+m})\rbrace_{m\in\Z}$: $g_{k}u_{k}\rightharpoonup0$
means in this case uniform convergence. The earliest cocompactness
result for functional spaces known to the authors is the proof of cocompactness
of embedding of the inhomogeneous Sobolev space $H^{1,p}(\R^{N})$,
$N>p$, into $L^{q}$, $q\in(p,p^{*})$, where $p^{*}=\frac{pN}{N-p}$,
relative to the group of shifts $u\mapsto u(\cdot-y),\;y\in\R^{N}$,
by E. Lieb \cite{LIeb}, although expressed in very different terms
(the term \emph{cocompactness} appeared in literature only a decade
ago). 

A profile decomposition relative to a group $\mathcal{G}$ of bijective
isometries expresses the defect of compactness as a sum of \emph{elementary
concentrations}, or \emph{bubbles}, $\sum_{n\in\N}g_{k}^{(n)}w^{(n)}$
, with some $g_{k}^{(n)}\in\mathcal{G}$ and $w^{(n)}\in E$, $k\in\N$,
$n\in\N$. The elements $w^{(n)}$, called \emph{concentration profiles},
are then obtained as weak limits of $(g_{k}^{(n)})^{-1}u_{k}$ as
$k\to\infty$. Typical examples of groups $\mathcal{G}$, involved
in profile decompositions, are the above mentioned group of shifts
and the rescaling group, which is a product group of shifts and dilations
$u\mapsto t^{r}u(t\cdot)$, $t>0$, where $r=\frac{N-p}{p}$ for $\dot{H}^{1,p}(\R^{N})$,
$N>p$. 
\vskip3mm
For a smooth Riemannian manifold $M$, the Sobolev space $H^{1,2}(M)$
is defined as a completion of $C_{0}^{\infty}(M)$ with respect to
the norm 
\[
\|u\|^{2}=\int_{M}(|du|^{2}+|u|^{2})\mathrm{d}\mu,
\]
 where $d$ stays for the covariant derivative and $\mu$ for the
Riemannian measure on $M$. We will discuss later also the matters
concerning definition of a counterpart of the space $\dot{H}^{1,2}(\R^{N})$
for manifolds. In what follows the unqualified notation of the norm
will always refer to the $H^{1,2}(M)$-norm, and this extends to the notation of the corresponding scalar product. The $L^{p}$ norm for
the manifold will be denoted as $\|u\|_{p}$.

This paper is motivated by a profile decomposition for sequences in Sobolev spaces of compact manifolds by Struwe \cite[Proposition 2.1]{Struwe},
where the defect of compactness for the embedding $H^{1,2}(M)\hookrightarrow L^{2^{*}}(M)$
is a finite sum of bubbles of the form $t_{k}^{\frac{N-2}{2}}w\circ\exp_{y_k}^{-1}(t_{k}\exp_{y_k}(x))$,
$t_{k}\to\infty$, $y_k\in M$, multiplied by a suitable cut-off function. 
Struwe's result is restricted to  Palais-Smale sequences of
semilinear elliptic functionals, which assures that the number of bubbles is finite. Struwe's argument for vanishing of the remainder in $L^{2^{*}}$ is also based on properties of Palais-Smale sequences. 

The earliest known result on profile decompositions for general bounded
sequences in $\dot{H}^{1,p}(\R^{N})$ equipped with the rescaling
group was proved by Solimini \cite{Solimini}. It was repeated later,
and with a weaker form of asymptotics, in \cite{Gerard} and \cite{Jaffard}
(\cite{Jaffard} also extended the result to fractional Sobolev spaces).
Subsequently, profile decompositions were found in the general functional-analytic
setting relative to a general group of isometries, for Hilbert spaces
in \cite{SchinTin}, and for uniformly convex Banach spaces with the
Opial condition in \cite{SoliTi} (without the Opial condition weak
convergence has to be replaced in the statement by the less-known
Delta convergence). However, despite the general character of this
result, some known profile decompositions do not follow directly from
\cite{SoliTi}, in particular, when the space $E$ is not reflexive
(\cite{AT_BV}), when one has only a semigroup of isometries (e.g.
\cite{AdiTi-Pisa}), or when the profile decomposition can be expressed
without a group (\cite{DevT,SkrTi4}). The latter papers are dealing
with generalizations of the profile decomposition of Solimini to the
case of Riemannian manifolds. Paper \cite{SkrTi4} proves profile
decomposition for the embedding $H^{1,2}(M)\hookrightarrow L^{p}(M)$,
$2<p<2^{*}$, when $M$ is a non-compact manifold of bounded geometry,
where the loss of compactness is due to bubbles shifting to infinity
(concentration compactness of type I in the framework of Lions), and
paper \cite{DevT} deals with the case of compact manifold and $p=2^{*}$,
where the loss of compactness is due to Struwe's bubbles, thus generalizing
Struwe's profile decomposition to the case of general sequences. (This
corresponds to concentration compactness of type II in the sense of
Lions). However, the results of \cite{DevT} and \cite{SkrTi4} do
not generalize the original profile decomposition for $\R^{N}$
to the case of manifolds in full: the former is restricted to compact manifolds
and the latter is restricted to the case of subcritical exponent.
The present paper gives an answer to the question if there is a profile
decomposition for the embedding $H^{1,2}(M)\hookrightarrow L^{2^{*}}(M)$
with both shifts and Struwe's bubbles. Our method of proof is different from  \cite{DevT}, whose argument does not yield an obvious generalisation to non-compact manifolds, while the result of  \cite{SkrTi4} is employed here as an intermediate step.

Certain limitations have to be imposed on $M$, since the embedding
$H^{1,2}(M)\hookrightarrow L^{2^{*}}(M)$ itself does not exist for
every manifold. Following the reasoning of \cite{SkrTi4} we deal
here only with manifolds of bounded geometry, which generate
concentration profiles at infinty which are still functions in a Sobolev space of a smooth Riemannian manifold, although this manifold is generally not the original $M$.

The other limitation concerns the use of the full Sobolev norm and
not the gradient norm of $\dot{H}^{1,2}$ as in \cite{Solimini}.
For sequences bounded in $H^{1,2}(\R^{N})$, the $L^{2}$-bound eliminates
from the profile decomposition of Solimini any deflations (``reverse
bubbles'') $t_{k}^{\frac{N-2}{2}}w(t_{k}(x-y_{k}))$ with $t_{k}\to0$, because their $L^{2}$-norms are unbounded. In comparison,
on the hyperbolic space (or any space of negative curvature bound
away from zero, see \cite{chavel-eigen}) the gradient norm dominates the $L^{2}$-norm,
which suppresses appearance of deflations in the profile decomposition. On the other hand,
on the sphere (as on any other compact manifold) quadratic form $\int_{M}|du|^{2}d\mu$
defines only a seminorm whose null space consists of constants. Appending it, for example, by $\left(\int_{M}u\,d\mu\right)^{2}$ yields a norm that also dominates the $L^{2}$-norm which eliminates deflations. 

The example of $M=\R^N$ shows that when $\dot{H}^{1,2}(M)\neq H^{1,2}(M)$, the defect of compactness may have to account also for some analog of "deflation" terms $t_k^\frac{N-2}{2}w(t_k\cdot)$, $t_k\to 0$, but while every manifold admits local "zoom-in" maps, there is no natural global "zoom out" map that could describe concentration of deflative character on every (approximately flat) manifold.  
\vskip3mm
The paper is organized as follows. In Section 2 we show that sequences
bounded in $H^{1,2}(M)$, that already vanish in $L^{p}(M)$ for some
$p\in(2,2^{*})$, vanish also in $L^{2^{*}}(M)$ if they satisfy a
``no bubbles'' condition. In Section 3 we show that sequences bounded
in $H^{1,2}(M)$ that vanish in $L^{p}(M)$ for some $p\in(2,2^{*})$,
have a subsequence that satisfies the ``no bubbles'' condition (and
thus vanishes in $L^{2^{*}}(M)$ ) after one subtracts from it all
its bubbles, and that the bubbles are asymptotically decoupled. In Section 4 we recall the result of \cite{SkrTi4} on the defect of compactness
for the embedding $H^{1,2}(M)\hookrightarrow L^{p}(M)$, $p\in(2,2^{*})$,
which says that a sequence bounded in $H^{1,2}(M)$ has a subsequence
that, after subtraction of its weak limit and all suitably defined
shift-concentration terms, vanishes in $L^{p}(M)$,$p\in(2,2^{*})$.
Combining this result with the result of Section 3 we obtain the main
result of this paper, Theorem \ref{thm:finalPD}, which says that the defect of compactness
for the embedding $H^{1,2}(M)\hookrightarrow L^{2^*}(M)$
is a sum of a series of bubbles and a series of shift-concentrations,
all of them mutually decoupled. 
Appendix summarizes some statements concerning manifolds of bounded geometry and manifolds at infinity used in the main body of the paper.

\section{Vanishing lemma}

From now on we assume that $M$ is a smooth, complete, connected $N$-dimensional
Riemannian manifold of bounded geometry. The latter property is defined
as follows. 
\begin{defn}
\label{def:bg}(Definition A.1.1 from \cite{Shubin}) A smooth Riemannian
manifold $M$ is of bounded geometry if the following two conditions
are satisfied: 

(i) The injectivity radius $r(M)$ of $M$ is positive. 

(ii) Every covariant derivative of the Riemann curvature tensor $R^{M}$of
M is bounded, i.e., $\nabla^{k}R^{M}\in L^{\infty}(M)$ for every
$k=0,1,\dots$ 
\end{defn}

We refer the reader to the appendix for elementary properties of manifolds
of bounded geometry used in this paper, and existence of an appropriate
covering of such manifolds. 

In what follows $B(x,r)$ will denote a geodesic ball in $M$ and
$\Omega_{r}$ will denote the ball in $\R^{N}$ of radius $r$ centered
at the origin. Let $e_{x}:\Omega_{r}\to B(x,r)$ , $r<r(M)$. be an
exponential map of $M$ under identification of the ball of radius
$r$ centered at the origin in $T_{x}M$ as the ball $\Omega_{r}\subset\R^{N}$
(we reserve the standard notation $\mathrm{exp}_{x}$ for a standard
exponential map $T_{x}M\to M$, so that $e_{x}=\exp_{x}\circ i_{x}$
where $i_{x}$ is an arbitrarily fixed linear isometry between $\R^{N}$
and $T_{x}M$). 

We will deal first with sequences that are bounded in $H^{1,2}(M)$
and vanish in $L^{p}(M)$ for some $p\in(2,2^{*})$ (note that if
this is the case for some value of $p$ in the interval than it is
true for any other value in the interval). Such sequences may still
have $L^{2^{*}}$-norm bounded away from zero. For example, let $v\in C_{0}^{\infty}(\Omega_{r})$
and let $u_{t}(x)=\sum_{n=1}^{m}t^{\frac{N-2}{2}}v(te_{y_{n}}^{-1}(x))$,
where $y_{1},\dots,y_{m}$ are distinct points on $M$, $m\in\N$,
$t>1$, and each term in the sum is understood as extended by zero
outside of $B(y_{n},\Omega_{r/t}$). Then $u_{t}\to0$ in $L^{p}(M)$
for any $p\in(2,2^{*}),$ while the sequence is bounded in the $H^{1,2}$-norm
and $\|u_{k}\|_{2^{*}}^{2^{*}}\to m\|v\|_{2^{*}}^{2^{*}}$. If we
provide a profile decomposition for such sequence with the remainder
vanishing in $L^{p}(M)$ for all $p\in(2,2^{*}],$ then we will have
a profile decomposition for all sequences relative to the embedding
$H^{1,2}(M)\hookrightarrow L^{2^{*}}(M)$, consisting of the sum of
the profile decomposition relative to the embedding $H^{1,2}(M)\hookrightarrow L^{p}(M)$,
$p\in(2,2^{*})$ (given in \cite{SkrTi4}) and our new profile decomposition
for this remainder (which vanishes in $L^{p}(M)).$ For these ends
we identify a sequence that vanishes in $L^{p}(M)$ and passes a ``nonconcentration''
test (relation (\ref{eq:noconc}) below) as vanishing in $L^{2^{*}}(M).$
\begin{remark}
 In what follows we will often consider sequences of the form  $t_{k}^{\frac{N-2}{2}} u_{k}\circ e_{y_{k}}(t_{k}\cdot)$, $t_k\to 0$, which are defined on $\Omega_R\subset\R^N$ for every $R$, provided that $k$, dependently on $R$, is sufficiently large, which allows a natural definition of their pointwise limit on $\R^N$. It is easy to see that if $(u_k)$ is a bounded sequence in $H^{1,2}(M)$, then $(t_{k}^{\frac{N-2}{2}} u_{k}\circ e_{y_{k}}(t_{k}\cdot))$ converges pointwise on $\R^N$ to some $w$ if and only if for any $\varphi\in C_c(\R^N)$, $\int_Md(t_{k}^{\frac{N-2}{2}} u_{k}\circ e_{y_{k}}(t_{k}\cdot))\cdot d\varphi d\mu\to \int_Mdw\cdot d\varphi d\mu$ as $k\to\infty$, which we for short will denote, slightly abusing the notation of weak convergence, as $u_k\rightharpoonup w$ or $w=\wlim u_k$. Moreover, for every sequence $(u_k)$ bounded in $H^{1,2}(M)$ and every sequence $(t_k)$ with $t_k\to 0$, the sequence   $(t_{k}^{\frac{N-2}{2}} u_{k}\circ e_{y_{k}}(t_{k}\cdot))$  has a subsequence that converges pointwise and weakly in the above sense.  
\end{remark}
From now on we fix a positive number $r<r(M)$. 
\begin{thm}["Vanishing lemma"]
\label{thm:vanishing} Let $M$ be a complete smooth $N$-dimensional
Riemannian manifold of bounded geometry, and let $(u_{k})$ be a bounded
sequence in $H^{1,2}(M)$ such that for any sequence of points $y_{k}\in M$,
and any sequence $t_{k}\to0$,

\begin{equation}
t_{k}^{\frac{N-2}{2}}u_{k}\circ e_{y_{k}}(t_{k}\xi)\longrightarrow0\mbox{ a.e. in }\R^{N}.\label{eq:noconc}
\end{equation}
Furthermore, assume that $u_{k}\to0$ in $L^{p}(M)$ for some $p\in(2,2^{*})$.
Then $u_{k}\to0$ in $L^{2^{*}}(M)$.
\end{thm}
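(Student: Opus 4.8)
The strategy is a standard "concentration function" / covering argument combined with a rescaling (blow-up) argument to derive a contradiction. Suppose, for contradiction, that $\|u_k\|_{2^*}$ does not tend to zero; passing to a subsequence, $\|u_k\|_{2^*}^{2^*}\ge\delta>0$. Cover $M$ by geodesic balls $B(x_i,r/2)$ of uniformly bounded multiplicity $\Lambda$ (this is one of the appendix facts about bounded geometry). Then
\[
\delta\le\|u_k\|_{2^*}^{2^*}\le\sum_i\int_{B(x_i,r/2)}|u_k|^{2^*}\,d\mu,
\]
so there must be at least one ball, with center we call $y_k$, on which the local $L^{2^*}$ mass is bounded below: $\int_{B(y_k,r/2)}|u_k|^{2^*}\,d\mu\ge\varepsilon_0>0$ along a further subsequence (here one uses the bounded multiplicity plus the uniform $H^{1,2}$-bound together with the subcritical Sobolev embedding on a fixed ball to rule out the mass being spread over infinitely many balls — this is where $u_k\to0$ in $L^p(M)$, $p\in(2,2^*)$, will enter). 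So the "local" critical mass does not vanish.

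Next I would run the standard Lions-type concentration-function argument on the fixed ball. Pulling back by $e_{y_k}$ to $\Omega_r\subset\R^N$, set $v_k=u_k\circ e_{y_k}$, a sequence bounded in $H^{1,2}(\Omega_r)$ with $\int_{\Omega_{r/2}}|v_k|^{2^*}\,dx\ge\varepsilon_0$ but (by the hypothesis $u_k\to0$ in $L^p$) $\int_{\Omega_r}|v_k|^p\,dx\to0$. Define the concentration function
\[
Q_k(\rho)=\sup_{z}\int_{B(z,\rho)\cap\Omega_r}|v_k|^{2^*}\,dx,
\]
and choose $t_k\to\infty$ and $z_k$ so that $Q_k(1/t_k)$ captures a fixed fraction of $\varepsilon_0$ at the scale $1/t_k$ around $z_k$; the point is that because the $L^p$-norm vanishes while the $L^{2^*}$-norm does not, by interpolation-type reasoning the critical mass must concentrate, forcing $t_k\to\infty$ (equivalently the relevant scale $\to0$). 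Then the rescaled functions $w_k(\xi)=t_k^{-\frac{N-2}{2}}v_k(z_k+t_k^{-1}\xi)=t_k^{-\frac{N-2}{2}}u_k\circ e_{y_k}(z_k+t_k^{-1}\xi)$ are bounded in $\dot H^{1,2}(\R^N)$ (the $H^{1,2}$-bound on $u_k$ controls the gradient; the lower-order term scales away since $t_k\to\infty$), have $\int_{B(0,1)}|w_k|^{2^*}\,d\xi$ bounded below, and — crucially — have a uniformly small concentration function at unit scale by the choice of $t_k$. The cocompactness of $\dot H^{1,2}(\R^N)\hookrightarrow L^{2^*}(\R^N)$ relative to the rescaling group (equivalently the standard dichotomy: a $\dot H^{1,2}$-bounded sequence with vanishing unit-scale $L^{2^*}$-concentration function vanishes in $L^{2^*}$) then forces $w_k\to0$ in $L^{2^*}_{loc}$, contradicting the lower bound on $\int_{B(0,1)}|w_k|^{2^*}\,d\xi$ — unless $w_k$ has a nonzero weak limit $w$, in which case $w\ne0$ and, after identifying the metric pulled back and rescaled as converging to the flat metric on $\R^N$ (another bounded-geometry fact, since $t_k\to\infty$ zooms into an approximately flat picture), one checks that $w_k\rightharpoonup w\ne0$ in the sense of the Remark, i.e. $t_k^{-\frac{N-2}{2}}u_k\circ e_{y_k}(z_k+t_k^{-1}\cdot)$ has a nonzero pointwise limit. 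Absorbing the shift $z_k$ into the base point (replacing $y_k$ by $e_{y_k}(z_k)$, legitimate once $z_k$ stays in $\Omega_{r/2}$ so the new exponential chart is defined) this directly contradicts the nonconcentration hypothesis \eqref{eq:noconc}.

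The main obstacle is the interface between the flat Euclidean harmonic-analysis facts (concentration function, cocompactness of $\dot H^{1,2}(\R^N)\hookrightarrow L^{2^*}$) and the manifold: one must justify that after pulling back and rescaling by a factor $t_k\to\infty$ the metric coefficients $g_{ij}$ converge in $C^1_{loc}$ (or at least $L^\infty_{loc}$ with uniform ellipticity) to $\delta_{ij}$, so that the Euclidean Sobolev and concentration estimates apply uniformly in $k$; this uses boundedness of the curvature and its derivatives from Definition \ref{def:bg}. A secondary point requiring care is the bookkeeping that lets one pass from "$\|u_k\|_{2^*}\not\to0$" to "the critical mass concentrates at a single shrinking ball": one should argue that if the concentration function at a fixed positive scale $\rho$ did not vanish then, together with $u_k\to0$ in $L^p$ and the subcritical compact embedding $H^{1,2}(B)\hookrightarrow L^{2^*-\eta}(B)$ on a fixed ball, one already reaches a contradiction, so necessarily the scale must shrink and $t_k\to\infty$. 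Once these two points are in place the contradiction with \eqref{eq:noconc} is immediate and the proof is complete.
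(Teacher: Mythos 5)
Your route is genuinely different from the paper's. The paper does not use a concentration function at all: it decomposes $u_k$ by dyadic level sets $\lbrace j\le|u_k|\le 2^{\frac{N-2}{2}}j\rbrace$, proves by a summed Sobolev inequality that $\|u_k\|_{2^*}$ is controlled by the supremum over levels, selects the dominant level $j_k$, covers $M$ by balls of the level-determined radius $t_kr$ with $t_k=j_k^{-2/(N-2)}$, and selects the dominant ball. After rescaling, the integrand on the selected level set takes values in a fixed bounded interval, so the a.e.\ convergence \eqref{eq:noconc} together with dominated convergence finishes the proof. Your Lions--Struwe blow-up sketch, by contrast, has two genuine gaps.

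First, your justification that the concentration scale must shrink is wrong: non-vanishing of the concentration function at a \emph{fixed} positive scale does not contradict $u_k\to0$ in $L^p$ via the compact embedding $H^{1,2}(B)\hookrightarrow L^{2^*-\eta}(B)$. A single bubble concentrating at a fixed point keeps $L^{2^*}$-mass bounded below on a fixed ball, vanishes in every subcritical $L^q$, and is bounded in $H^{1,2}$; the compact subcritical embedding is consistent with all of this. The claim that the relevant scale tends to zero needs a different (and more delicate) argument than the one you indicate.

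Second, and more seriously, the step where the blow-up at the selected scale is asserted to produce a \emph{nonzero} weak limit is precisely the step that in Struwe's proof relies on the Palais--Smale/PDE structure -- the restriction this theorem is designed to remove. For a general bounded sequence, normalizing $Q_k(1/t_k)=\nu$ does not prevent the captured mass from escaping to still smaller scales inside the selected ball, so $w_k$ may converge weakly to zero. You attempt to close this with the cocompactness of $\dot H^{1,2}(\R^N)\hookrightarrow L^{2^*}(\R^N)$; as a cited theorem this yields a nonzero profile only at \emph{some} secondary scale $\lambda_k$ and center, and to contradict \eqref{eq:noconc} you must verify that the composed scale $\lambda_k/t_k$, transported back to the manifold, tends to zero. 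The cases where the secondary rescaling zooms back out to unit scale or beyond are not addressed and require a separate argument (using $u_k\to0$ in $L^p$, Rellich on a fixed chart, and the $L^2$-part of the $H^{1,2}$-bound to exclude deflations). Until those cases are handled, no contradiction with \eqref{eq:noconc} is obtained; the paper's level-set device is exactly what makes the ``nonzero profile'' step unnecessary.
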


\begin{proof}
Step1. For any $u\in H^{1,2}(M)$ the following holds:
\begin{equation}
\|u\|_{2^{*}}^{2^{*}}\le C\|u\|_{H^{1,2}}^{2}\sup_{j\in2^{\frac{N-2}{2}\Z}}\left(\int_{j\le|u(x)|\le2^{\frac{N-2}{2}}j}|u|^{2^{*}}\mathrm{d}\mu\right)^{\frac{2}{N}}.\label{eq:prange}
\end{equation}
Indeed, let $\chi\in C_{0}^{1}(2^{-\frac{N-2}{2}},2^{N-2})$, extended
by zero to $[0,\infty)$ be such that $\chi(s)\in[0,1]$ for all $s$
and $\chi(s)=1$ whenever $s\in[1,2^{\frac{N-2}{2}}]$. Let $\chi_{j}(s)=j\chi(j^{-1}s)$,
${\normalcolor j\in2^{\frac{N-2}{2}\Z}}$.

Applying Sobolev inequality to $\chi_{j}(|u|)$ we get 

\[
\left(\int_{j\le|u(x)|\le2^{\frac{N-2}{2}}j}|u|^{2^{*}}\mathrm{d}\mu\right)^{2/2^{*}}\le C\int_{2^{-\frac{N-2}{2}}j\le|u(x)|\le2^{N-2}j}(|du|^{2}+|u|^{2})\mathrm{d}\mu,
\]
from which we have
\[
\int_{j\le|u(x)|\le2^{\frac{N-2}{2}}j}|u|^{2^{*}}\mathrm{d} \mu \ \ \le \]
\[C\int_{2^{-\frac{N-2}{2}}j\le|u(x)|\le2^{N-2}j}(|du|^{2}+|u|^{2})\mathrm{d}\mu\left(\int_{j\le|u(x)|\le2^{\frac{{N-2}}{2}}j}\mathrm{|u|^{2^{*}}d}\mu\right)^{1-\frac{2}{2^{*}}}.
\]
Adding the inequalities over $j\in2^{\frac{N-2}{2}\Z}$ , while evaluating
the last term by its upper bound, we get (\ref{eq:prange}). 

Step 2. Apply (\ref{eq:prange}) to the sequence $u_{k}$. 

Let $j_{k}\in2^{\frac{N-2}{2}\Z}$ be such that 
\begin{equation}
\sup_{j\in2^{\frac{N-2}{2}\Z}}\int_{j\le|u_{k}(x)|\le2^{\frac{N-2}{2}}j}|u|_{k}^{2^{*}}\mathrm{d}\mu\le2\int_{j_{k}\le|u_{k}(x)|\le2^{\frac{N-2}{2}}j_{k}}|u|^{2^{*}}\mathrm{d}\mu.\label{eq:nearmax}
\end{equation}
Then we have from (\ref{eq:prange}) 
\begin{equation}
\|u_{k}\|_{2^{*}}\le C\left(\int_{j_{k}\le|u(_{k}x)|\le2^{\frac{N-2}{2}}j_{k}}|u_{k}|^{2^{*}}\mathrm{d}\mu\right)^{\frac{2}{2^{*}N}}\label{eq:redux1}
\end{equation}

It suffices to consider two cases: $j_{k}\le L$ for all $k$, $L\in\Z$,
and $j_{k}\to+\infty$. In the first case we have from (\ref{eq:redux1})
with any small $\varepsilon>0,$
\[
\|u_{k}\|_{2^{*}}\le C\left(\int_{j_{k}\le|u_{k}(x)|\le2^{\frac{N-2}{2}}j_{k}}|u_{k}|^{2^{*}}\mathrm{d}\mu\right)^{\frac{2}{N2^{*}}}\le C\left(L^{2^{*}-p}\int_{M}|u_{k}|^{p}\mathrm{d}\mu\right)^{\frac{2}{N2^{*}}}
\]
which converges to $0$ by the assumption on the sequence. 

Step 3. Consider now the second case, $j_{k}\to\infty$. For each  $k\in\N$, let $Y_{k}\subset M$
be a discretization of $M$ given by Lemma \ref{lem:covering}, that is, a collection of balls $\lbrace B(y,t_{k}r)\rbrace_{y\in Y_{k}}$ with
$t_{k}=j_{k}^{-\frac{2}{N-2}},$ form a covering for $M$ of uniformly
finite multiplicity. Let $D_{k}=\lbrace x\in M:|u_{k}(x)|\in[j_{k},2^{\frac{N-2}{2}}j_{k}]\rbrace$
and $D'_{k}=\lbrace x\in M:|u_{k}(x)|\in[2^{-\frac{N-2}{2}}j_{k},2^{N-2}j_{k}]\rbrace$.

By scaling of the Sobolev inequality, applied to $\chi_{j_{k}}(|u_{k}|)$
on the geodesic ball $B(y,t_{k}r)$ there is a constant $C$ independent
of $k$ such that
\[
\left(\int_{B(y,t_{k}r)\cap D_{k}}|u_{k}|^{2^{*}}\mathrm{d}\mu\right)^{2/2^{*}}\le C\int_{B(y,t_{k}r)\cap D'_{k}}(|du_{k}|^{2}+t_{k}^{-2}|u_{k}|^{2})\mathrm{d}\mu.
\]
Taking into account that the integration domain in the right hand
side is a subset of $D'_{k},$ we have $t_{k}^{-2}|u_{k}|^{2}\le C|u_{k}|^{2^{*}}$
uniformly in $k$ , and thus we have 
$$
\begin{array}{c}
\int_{B(y,t_{k}r)\cap D_{k}}|u_{k}|^{2^{*}}\mathrm{d}\mu \ \le \\ \int_{B(y,t_{k}r)\cap D'_{k}}(|du_{k}|^{2}+|u_{k}|^{2^{*}})\mathrm{d}\mu\left(\int_{B(y,t_{k}r)\cap D_{k}}|u_{k}|^{2^{*}}\mathrm{d}\mu\right)^{\frac{2}{N}}.
\end{array}$$

Adding the inequalities above over $y\in Y_{k}$ and replacing the
second term in the right hand side by its upper bound, we have 
\begin{equation}
\int_{D_{k}}|u_{k}|^{2^{*}}\mathrm{d}\mu\le C\sup_{y\in Y_{k}}\left(\int_{B(y,t_{k}r)\cap D_{k}}|u_{k}|^{2^{*}}\mathrm{d}\mu\right)^{\frac{2}{N}}.\label{eq:redux2}
\end{equation}
Choosing points $y_{k}\in Y_{k}$ so that 
\[
\sup_{y\in Y_{k}}\int_{B(y,t_{k}r)\cap D_{k}}|u_{k}|^{2^{*}}\mathrm{d}\mu\le2\int_{B(y_{k},t_{k}r)\cap D_{k}}|u_{k}|^{2^{*}}\mathrm{d}\mu,
\]
we have from (\ref{eq:redux1}) and (\ref{eq:redux2}), 
\[
\|u_{k}\|_{2^{*}}\le C\left(\int_{B(y_{k},t_{k}r)\cap D_{k}}|u_{k}|^{2^{*}}\mathrm{d}\mu\right)^{\frac{4}{2^{*}N^{2}}}.
\]
Changing variables from a small ball on $M$ to small ball on $\R^{N}$
by the geodesic map $e_{y_{k}}$, and taking into account that $M$ is
a manifold of bounded geometry, so that the estimate of the Jacobian
is uniform in $k$, we have, with $\Delta_{k}=\lbrace\xi\in\Omega_{r}:|u_{k}\circ\exp_{y_{k}}(\xi)|\in(j_{k},2^{\frac{N-2}{2}}j_{k})\rbrace$,
for all $k$ large enough 
\[
\|u_{k}\|_{2^{*}}\le C\left(\int_{\Omega_{t_{k}r}\cap\Delta_{k}}|u_{k}\circ e_{y_{k}}(\xi)|^{2^{*}}\mathrm{d}\xi\right)^{\frac{4}{2^{*}N^{2}}}.
\]
Let us now change variables again by setting $\xi=t_{k}\eta$, $\eta\in\Omega_{r}$:
\[
\|u_{k}\|_{2^{*}}\le C\left(\int_{\lbrace\eta\in\Omega_{r}:|u_{k}\circ e_{y_{k}}(t_{k}\eta)|\in[j_{k},2^{\frac{N-2}{2}}j_{k}]\rbrace}|t_{k}^{\frac{N-2}{2}}u_{k}\circ e_{y_{k}}(t_{k}\eta)|^{2^{*}}\mathrm{d}\eta\right)^{\frac{4}{2^{*}N^{2}}},
\]
and note that the integrand converges to zero almost everywhere by
assumption, and is bounded by the constant $2^{N}$, so by Lebesgue
dominated convergence theorem the right hand side vanishes, which
proves the theorem.
\end{proof}

\section{Profile decomposition for sequences vanishing in $L^{p}$, $p<2^{*}$.}

We will start with a characterization of decoupling of bubbles involved
in our profile decomposition. 
\begin{defn}
We shall say that two sequences, $U_{k}$ and $V_{k}$, in a Hilbert
space $H$ are asymptotically orthogonal, if $(U_{k},V_{k})\to0$.
\end{defn}

Let us fix $\chi\in C_{0}^{\infty}(\Omega_{r})$ such that $\chi(\xi)=1$
whenever $|\xi|\le\frac{r}{2}$, extended by zero to a function on $\R^N$. %

\begin{lem}
\label{lem:aorth}Let (with extension to $M\setminus B_{r}(y_{k})$
by zero):
\[
V_{k}(x)=2^{\frac{N-2}{2}j_{k}}\,\chi\circ e_{y_{k}}^{-1}(x)\,v(2^{j_{k}}e_{y_{k}}^{-1}(x)),\;x\in M,
\]
\[
W_{k}(x)=2^{\frac{N-2}{2}\ell_{k}}\,\chi\circ e_{z_{k}}^{-1}(x)\,w(2^{\ell_{k}}e_{z_{k}}^{-1}(x)),\;x\in M
\]
where $j_{k},\ell_{k}\in\N$, $j_{k},\ell_{k}\to\infty$ and $y_{k},z_{k}\in M$.
Then $V_{k}$ and $W_{k}$ are asymptotically orthogonal in $H^{1,2}(M)$
for any choice of $v$ and $w$ in $\dot{H}^{1,2}(\R^{N})$
if and only if the following condition holds:
\begin{equation} \label{eq:orth3}
|\ell_k-j_k| \ + \ (2^{\ell_k}+2^{j_k}) d(y_k,z_k) \rightarrow \infty
\end{equation}
as $k\rightarrow \infty.$

Furthermore, given $w\in\dot H^{1,2}(\R^N)$, $(V_k,W_k)\to 0$ for any $v\in\dot H^{1,2}(\R^N)$ if and only if 
\begin{equation}\label{eq:zlm0}
2^{-j_{k}\frac{N-2}{2}}W_{k}\circ e_{y_{k}}(2^{-j_{k}}\xi)\to 0\mbox{ a.e. in }\R^N .
\end{equation}
\end{lem}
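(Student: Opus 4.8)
The plan is to pass to a geodesic chart centred at $y_k$, rescale by $2^{-j_k}$, and thereby reduce the scalar product $(V_k,W_k)$ to a pairing over $\R^N$ against precisely the rescaled bubble $\tilde W_k:=2^{-\frac{N-2}{2}j_k}\,W_k\circ e_{y_k}(2^{-j_k}\cdot)$ (extended by $0$ to $\R^N$) that figures in (\ref{eq:zlm0}); both assertions then follow from the behaviour of $\tilde W_k$, which is governed by bounded geometry and by the classical Euclidean theory of concentrating/deflating/drifting bubbles. As a first step one checks $\|V_k\|_{L^2(M)}\to0$ and $\|W_k\|_{L^2(M)}\to0$: writing $V_k$ in the chart $e_{y_k}$ and rescaling gives $\|V_k\|_{L^2(M)}^2\le C\,2^{-2j_k}\int_{\Omega_{2^{j_k}r}}|v|^2$, and since $v\in\dot H^{1,2}(\R^N)\subset L^{2^*}(\R^N)$ the Hölder bound $\int_{\Omega_R}|v|^2\le C(R_0)+CR^2\|v\|_{L^{2^*}(\R^N\setminus\Omega_{R_0})}^2$ forces $\int_{\Omega_R}|v|^2=o(R^2)$, hence $\|V_k\|_{L^2(M)}\to0$ (and likewise $W_k$). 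By Cauchy--Schwarz the $L^2$ cross term is then negligible, so $(V_k,W_k)_{H^{1,2}(M)}=\int_M dV_k\cdot dW_k\,d\mu+o(1)$. Pulling this back to the chart $e_{y_k}$ and rescaling, and using that $M$ has bounded geometry — so that on $\Omega_r$ the metric coefficients in normal coordinates, their inverse and $\sqrt{\det g}$ are uniformly bounded and tend to the Euclidean values as the base point is approached (the appendix) — one obtains that $(\tilde W_k)$ is bounded in $\dot H^{1,2}(\R^N)$ (the bound being the pullback of the bounded $\|W_k\|_{H^{1,2}(M)}^2$), and
\[
\int_M dV_k\cdot dW_k\,d\mu=\int_{\R^N}a_k\cdot d\tilde W_k\,d\xi,\qquad a_k\to\nabla v\ \text{in }L^2(\R^N;\R^N),
\]
where $a_k$ absorbs the cut-off $\chi(2^{-j_k}\cdot)$, the metric coefficients and $\sqrt{\det g}$, the convergence following by dominated convergence together with $\|v\|_{L^2(\{|\xi|\in 2^{j_k}[r/2,r]\})}=o(2^{j_k})$ (the same Hölder estimate) to absorb the $d\chi$-term.

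Consequently, along any subsequence on which $\tilde W_k\rightharpoonup\psi$ weakly in $\dot H^{1,2}(\R^N)$ one has $(V_k,W_k)\to\int_{\R^N}\nabla v\cdot\nabla\psi\,d\xi=(v,\psi)_{\dot H^{1,2}(\R^N)}$ (strong times weak). Since $\tilde W_k$ is of the form $t_k^{\frac{N-2}{2}}W_k\circ e_{y_k}(t_k\cdot)$ with $t_k=2^{-j_k}\to0$ and $(W_k)$ bounded in $H^{1,2}(M)$, the Remark applies to it: it converges a.e.\ along a subsequence, and for such sequences a.e.\ convergence to $w_*$ is equivalent to $\tilde W_k\rightharpoonup w_*$ in $\dot H^{1,2}(\R^N)$. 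This already yields the ``furthermore'' clause: if $\tilde W_k\to0$ a.e.\ then $\tilde W_k\rightharpoonup0$ and $(V_k,W_k)\to0$ for every $v$; conversely, if $(V_k,W_k)\to0$ for every $v$, then along any weakly convergent subsequence $\tilde W_k\rightharpoonup\psi$ one gets $(v,\psi)_{\dot H^{1,2}}=0$ for all $v$, so $\psi=0$, whence (bounded subsets of the separable Hilbert space $\dot H^{1,2}(\R^N)$ being weakly metrizable) $\tilde W_k\rightharpoonup0$ and $\tilde W_k\to0$ a.e.\ by the Remark.

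For the main assertion, the ``furthermore'' clause applied for each $w$ identifies ``$(V_k,W_k)\to0$ for all $v$ and $w$'' with ``$\tilde W_k\to0$ a.e.\ for all $w$'', i.e.\ (by the Remark and weak metrizability) with: for every $w$ and every subsequence, a further subsequence converges weakly to $0$ in $\dot H^{1,2}(\R^N)$. It remains to identify this with (\ref{eq:orth3}). If $d(y_k,z_k)$ stays bounded away from $0$ along a subsequence, then on each fixed $\Omega_R$ the function $\tilde W_k$ eventually either vanishes or is supported where the argument of $w$ has modulus $\to\infty$, so $\tilde W_k\to0$ in $L^2_{\mathrm{loc}}$ and $\tilde W_k\rightharpoonup0$; here (\ref{eq:orth3}) holds because $2^{j_k}d(y_k,z_k)\to\infty$. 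If $d(y_k,z_k)\to0$, then for large $k$ the point $\zeta_k:=e_{y_k}^{-1}(z_k)$ is defined with $|\zeta_k|=d(y_k,z_k)$, and by bounded geometry the transition map $e_{z_k}^{-1}\circ e_{y_k}$ agrees on $\Omega_{r/2}$ with the affine isometry $\sigma\mapsto O_k(\sigma-\zeta_k)$, $O_k\in O(N)$, up to a $C^1$-error quadratic near $\zeta_k$ and uniform in $k$; inserting this into $\tilde W_k$ gives, modulo a term vanishing in $\dot H^{1,2}(\R^N)$,
\[
\tilde W_k=2^{\frac{N-2}{2}(\ell_k-j_k)}\,w\!\left(2^{\ell_k-j_k}O_k\big(\cdot-2^{j_k}\zeta_k\big)\right)+o_{\dot H^{1,2}(\R^N)}(1),
\]
a Euclidean bubble of $w$ with scale exponent $m_k=\ell_k-j_k$ and centre $p_k=2^{j_k}\zeta_k$. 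The classical Euclidean criterion for such bubbles (Solimini \cite{Solimini}; the functional-analytic profile decompositions \cite{SchinTin,SoliTi}) now says that $\tilde W_k\rightharpoonup0$ if and only if $|m_k|+|p_k|\to\infty$, while otherwise, refining so that $m_k\to c\in\Z$, $p_k\to p_*$, $O_k\to O_*$, one has $\tilde W_k\rightharpoonup 2^{\frac{N-2}{2}c}w(2^cO_*(\cdot-p_*))$, which is nonzero when $w\neq0$. Since $|p_k|=2^{j_k}d(y_k,z_k)$ and $2^{j_k}+2^{\ell_k}\le C2^{j_k}$ whenever $\ell_k-j_k$ is bounded, ``$|m_k|+|p_k|\to\infty$'' is exactly (\ref{eq:orth3}). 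Thus (\ref{eq:orth3}) holds iff $\tilde W_k\to0$ a.e.\ for every $w$; and when (\ref{eq:orth3}) fails, taking $w\neq0$ and then $v=w_*$ (the nonzero weak limit) gives $(V_k,W_k)\to\|\nabla w_*\|_{L^2}^2\neq0$ along the subsequence, so the ``$\Rightarrow$'' of the main assertion holds as well.

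The technical core is the last step when $d(y_k,z_k)\to0$: taming the transition maps $e_{z_k}^{-1}\circ e_{y_k}$ uniformly via bounded geometry so as to realise $\tilde W_k$ as a genuine Euclidean bubble up to a $\dot H^{1,2}(\R^N)$-small remainder, and, within the ensuing Euclidean analysis, the ``deflation'' regime $\ell_k-j_k\to-\infty$, where $\tilde W_k$ does \emph{not} vanish in $L^2_{\mathrm{loc}}$ yet $\tilde W_k\rightharpoonup0$ in $\dot H^{1,2}(\R^N)$ — the familiar fact that dilations with factor tending to $\infty$ dissipate in the homogeneous gradient norm.
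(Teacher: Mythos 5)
Your proposal is correct, and it reaches the conclusion by a genuinely different organization of the argument than the paper's. The paper proves the main equivalence by a direct estimate of the cross term $\int_M dV_k\cdot dW_k\,d\mu$ computed in the chart $e_{z_k}$: after rescaling and Taylor-expanding the transition map $\psi_k=e_{y_k}^{-1}\circ e_{z_k}$ it splits into the elementary cases $|j_k-\ell_k|\to\infty$ (bounded by $C2^{\frac N2 m_k}$ after a symmetry reduction) and $|j_k-\ell_k|$ bounded with drifting centres (where $\nabla v$ is evaluated outside its support), and for necessity it exhibits an explicit nonvanishing pair by taking $w(\eta)=v(\eta_0+2^m\psi'(0)\eta)$; the ``furthermore'' clause is a short afterthought. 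You instead make the ``furthermore'' clause the engine: you reduce $(V_k,W_k)$ to a strong-times-weak pairing $\int a_k\cdot d\tilde W_k$ with $a_k\to\nabla v$ in $L^2$, identify a.e.\ convergence of $\tilde W_k$ with weak $\dot H^{1,2}(\R^N)$-convergence via the paper's Remark, realise $\tilde W_k$ as a Euclidean bubble $2^{\frac{N-2}{2}m_k}w(2^{m_k}O_k(\cdot-p_k))$ up to an $o_{\dot H^{1,2}}(1)$ remainder, and then invoke the classical Euclidean asymptotic-orthogonality criterion $|m_k|+|p_k|\to\infty$. What your route buys is a cleaner logical structure (necessity comes for free by testing against the weak limit, rather than by constructing a bespoke $w$) and a transparent link to the Solimini/abstract theory; what it costs is that the hardest cases (deflation/concentration $|m_k|\to\infty$ and drifting centres) are outsourced to the imported Euclidean criterion, and the $\dot H^{1,2}$-smallness of the remainder in the bubble approximation needs the careful uniform-in-$k$ control of the transition maps that you correctly flag as the technical core (your measure-times-amplitude bookkeeping there does check out in all three regimes of $m_k$). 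Two points you should make explicit: the reduction by density and bilinearity to $v,w\in C_c^\infty(\R^N)$, which your later steps tacitly use (compact support of $w$, uniform continuity of $\nabla w$), and the fact that $(\tilde W_k)$ is bounded in $\dot H^{1,2}(\R^N)$ (via Lemma \ref{lem:2.1}), which underlies both the weak-compactness and the metrizability arguments.
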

\begin{proof}
1. Note that $\int_{M}V_{k}^{2}d\mu\to0$ and $\int_{M}W_{k}^{2}d\mu\to0$,
so asymptotic orthogonality of $V_{k}$ and $W_{k}$ is equivalent
to $\int_{M}dV_{k}\cdot dW_{k}d\mu\to0$. 
By density of $C_0^\infty(\R^N)$ in $\dot H^{1,2}(\R^N)$ we may assume without loss of generality that $v, w \in C_0^\infty(\R^N)$. 

2. { \it Sufficiency:} First note that supports of $V_k$ and $W_k$ are contained in $B(y_k,r) \cap e_{y_k}(2^{-j_k}\mathrm{supp}\ v)$ and $B(z_k,r) \cap e_{z_k}(2^{-\ell_k}\mathrm{supp}\ w)$ respectively. Thus, if along a subsequence $\inf d(y_k,z_k) >0$, then the supports of $V_{k}$ and $W_{k}$ are disjoint for large $k$ and the conclusion follows. Hence we assume in the rest of the proof that $d(y_k,z_k)\rightarrow 0$ as $k\rightarrow \infty$.\\\\
The support of $dV_{k}\cdot dW_{k}$ is contained in $B(z_k,r)$ and hence calculating the integral in the coordinate chart $e_{z_k}$ we get  
$$ \int_{M}dV_{k}\cdot dW_{k}d\mu = \int_{\Omega_r} g^{\alpha \beta}(e_{z_k}(\xi))\partial_\alpha (V_k\circ e_{z_k})\partial_\beta (W_k\circ e_{z_k}) \sqrt{g(e_{z_k}(\xi))}d\xi$$
where $\partial_1,\dots,\partial_N$ is a shorthand for respective partial derivatives $\frac{\partial}{\partial \xi_1}, \dots , \frac{\partial}{\partial \xi_N}$, 
$\xi_1,\dots,\xi_N$ are the components of $\xi\in \Omega_r$,
 $g_{\alpha\beta}=\langle\frac{\partial}{\partial \xi_{\alpha}},\frac{\partial}{\partial \xi_{\beta}}\rangle_g,\ (g^{\alpha\beta}) = (g_{\alpha\beta})^{-1},\ g = \det (g_{\alpha\beta})$ and we use the summation convension over $\alpha,\beta \in \{1,...,N\}.$\\\\
Denoting  $j_{k}-\ell_{k}=m_{k},\ e_{y_k}^{-1}\circ e_{z_k}\ = \ \psi_k $, using the change of variable $\eta=2^{\ell_{k}}\xi $, and taking into account that $g^{\alpha\beta}(z_k) = \delta_{\alpha\beta} $ and $g(z_k)=1$, the above expression simplifies to $\int_{M}dV_{k}\cdot dW_{k}d\mu = \int_{B(z_k,r)\cap B(y_k,r)}dV_{k}\cdot dW_{k}d\mu = 2^{\frac{N-2}{2}m_k} \times$ 
$$  \int_{D_k}g^{\alpha \beta}(e_{z_k}(\xi))\partial_\alpha (\chi \circ \psi_k (\xi)v(2^{j_{k}}\psi_k(\xi)))\partial_\beta (\chi(\xi) w(2^{\ell_k}\xi)) \sqrt{g(e_{z_k}(\xi))}d\xi, $$
$$= 2^{\frac{N}{2}m_k}\int_{2^{\ell_k}D_k \cap \mathrm{supp}w}\left[\nabla w (\eta) \psi'_k(2^{-\ell_{k}}\eta)\cdot \nabla v (2^{j_k}\psi_k(2^{-\ell_{k}}\eta))  + o(1)\right]d\eta\
$$
where $D_k= e_{z_k}^{-1}\left(B(z_k,r)\cap B(y_k,r)\right)$, $\psi_k'$ is the $N\times N$-matrix derivative of $\psi_k$, and $o(1)$ denotes a sequence converging to zero uniformly in $\R^{N}$. Since $M$ is of bounded geometry using standard arguments we may assume $\psi_k$ and its derivatives will converge locally uniformly in $\Omega_r$ to some $\psi \in C^\infty(\Omega_r \rightarrow \overline{\Omega_r})$ and its respective derivatives.\\
Since \eqref{eq:orth3} holds, we have up to subsequences two cases :\\
{\it Case 1 :}  $|\ell_k-j_k| \rightarrow \infty $.\\
In this case we may assume without loss of generality that $j_{k}-\ell_{k}=m_{k} \rightarrow -\infty $ and hence using the boundednesss of $\psi_k$ and the fact that $v,w \in C_c^\infty(\R^N)$ we get 
$$
\left|\int_{M}dV_{k}\cdot dW_{k}d\mu\right|\  \le \ C 2^{\frac{N}{2}m_k}\  \rightarrow 0.  
$$
{\it Case 2 :} $|\ell_k-j_k|$ is bounded and $ \ (2^{\ell_k}+2^{j_k}) d(y_k,z_k) \rightarrow \infty $.\\
For $\eta \in 2^{\ell_k}D_k \cap \mathrm{supp}\ w$,we have 
$$2^{j_k}\psi_k(2^{-\ell_{k}}\eta)= 2^{j_k}\psi_k(0) + 2^{j_k}\psi'_k(0)2^{-\ell_{k}}\eta \ + \ O(2^{j_k-2\ell_{k}})$$
Note that since the exponential map is preserving distance from the origin in $\Omega_r$, we have $|\psi_k(0)|= d(y_k,z_k)$. Using this with the fact that $m_k$ is bounded we get $|2^{j_k}\psi_k(2^{-\ell_{k}}\eta)| \rightarrow \infty\ ,  \ {\rm as}\ \ k\rightarrow \infty$ (with uniform convergence) and hence $\nabla v (2^{j_k}\psi_k(2^{-\ell_{k}}\eta))=0$
for all $\eta \in \mathrm{supp}\ w$ provided that $k$ is sufficiently large, and hence the conclusion follows.

3. {\it Necessity:} Assume that the condition (\ref{eq:orth3}) does not hold. Then, on a renamed subsequence,  $j_{k}-\ell_{k}=m\in\Z$ and $2^{j_k}d(y_k,z_k)= |2^{j_k}\psi_k(0)|$ remains bounded. Hence passing to a further subsequence if necessary we may assume $2^{j_k}\psi_k(0)\to\eta_0\in\R^N$. Then, repeating calculations in the proof of sufficiency, we get  
$$
	\int_{M}dV_{k}\cdot dW_{k}d\mu\rightarrow 2^{\frac{N}{2}m}\int_{\R^N}\nabla w (\eta) \cdot \psi'(0)\nabla v (\eta_0+2^{m}
	\psi'(0)\eta)\,d\eta$$
	
Using elementary properties of the exponential map we can show that $\psi'(0)$ is invertible. Hence the above expression is nonzero, for example, when $v\neq 0$ and $w(\eta)= v(\eta_0+ 2^m\psi'(0)\eta)$.

4. Finally, writing the scalar product of $H^{1,2}(M)$ in the normal coordinates at $y_k$ we have
\begin{align*}
(W_k,V_k)=\int_{\R^N}\nabla 2^{-j_{k}\frac{N-2}{2}}W_{k}\circ e_{y_{k}}(2^{-j_{k}}\xi)\cdot\nabla v(\xi) d\xi+o(1),
\end{align*}
which proves that \eqref{eq:zlm0} is equivalent to asymptotic orthogonality of $V_k,W_k$ for all $v$. 
\end{proof}
We now provide a profile decomposition for sequences that are bounded
in $H^{1,2}(M)$ and vanish in $L^{p}(M)$ for some $p\in(2,2^{*})$.
This will allow us to consider a general bounded sequence in $H^{1,2}(M)$,
for which a profile decomposition with a remainder vanishing in $L^{p}$
is already known, identify blowups in this remainder, and after subtracting
them, obtain a profile decomposition for the remainder up to a term
that, by Theorem \ref{thm:vanishing}, will vanish already in $L^{2^{*}}$.

\begin{thm}
\label{thm:PD-blowupsonly}Let $M$ be a complete smooth $N$-dimensional
Riemannian manifold of bounded geometry, and let $(u_{k})$ be a bounded
sequence in $H^{1,2}(M)$ convergent to zero in $L^{p}(M)$ for some
$p\in(2,2^{*})$. Then, there exist sequences of points $(y_{k}^{(n)})_{k\in\N}$
in $M$ and of numbers $(j_{k}^{(n)})_{k\in \N}$ in $\N$, $j_{k}^{(n)}\to+\infty$,
as well as functions $w^{(n)}\in\dot{H}^{1,2}(\R^{N})$, $n\in\N,$
such that 
\begin{equation}
2^{-j_{k}^{(n)}\frac{N-2}{2}}u_{k}\circ e_{y_{k}^{(n)}}(2^{-j_{k}^{(n)}}\xi)\to w^{(n)}(\xi)\;\mbox{ a.e. in }\R^{N},\label{eq:profiles01}
\end{equation}
$\mathbf{(AO)}$\qquad{}Whenever $m\neq n$, the sequences $j_{k}^{(m)},$
$y_{k}^{(m)}$, $j_{k}^{(n)},$ $y_{k}^{(n)}$ satisfy the
condition (\ref{eq:orth3}) with $j_{k}=j_{k}^{(m)},$
$y_{k}=y_{k}^{(m)}$, $\ell_{k}=j_{k}^{(n)}$ and $z_k=y_{k}^{(n)}$.

The series $S_{k}=\sum_{n\in\N}W_{k}^{(n)},$ where
\begin{equation}
W_{k}^{(n)}(x)=2^{j_{k}^{(n)}\frac{N-2}{2}}\,\chi\circ e_{y_{k}^{(n)}}^{-1}(x)\,w^{(n)}(2^{j_{k}^{(n)}}e_{y_{k}^{(n)}}^{-1}(x)),\:x\in M,\label{eq:Sksum}
\end{equation}

converges in $H^{1,2}(M)$ unconditionally with respect to $n$ and uniformly in $k$,

\begin{equation}
u_{k}-S_{k}\to0\mbox{ in }L^{2^{*}}(M)\, ,\label{eq:remainder}
\end{equation}

and 

\begin{equation}
\sum_{n\in\N}\int_{\R^{N}}|\nabla w^{(n)}|^{2}dx\le\liminf\int_{M}|du_{k}|^{2}d\mu.\label{eq:Planch01}
\end{equation}
\end{thm}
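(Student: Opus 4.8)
The plan is to extract the bubbles one at a time by an iterated concentration argument, using Theorem~\ref{thm:vanishing} as the stopping criterion, and to control the sum by an energy budget argument based on \eqref{eq:Planch01}. First I would set $v_k^{(0)}=u_k$ and check whether $v_k^{(0)}$ already satisfies the nonconcentration hypothesis \eqref{eq:noconc}; if so, by Theorem~\ref{thm:vanishing} it vanishes in $L^{2^*}(M)$ and we are done with no bubbles. Otherwise there exist, along a subsequence, points $y_k^{(1)}\in M$ and scales $t_k^{(1)}\to 0$ with $(t_k^{(1)})^{\frac{N-2}{2}}v_k^{(0)}\circ e_{y_k^{(1)}}(t_k^{(1)}\cdot)$ not tending to zero a.e.; after passing to a further subsequence (Remark~2.3) this rescaled sequence converges pointwise and weakly to some $w^{(1)}\in\dot H^{1,2}(\R^N)$, $w^{(1)}\neq 0$. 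Here I would replace the continuous scales $t_k^{(1)}$ by dyadic scales $2^{-j_k^{(1)}}$ with $j_k^{(1)}=\lfloor -\log_2 t_k^{(1)}\rfloor\to\infty$, absorbing the bounded ratio into a redefinition of $w^{(1)}$ (and possibly composing with a fixed linear map); the weak limit \eqref{eq:profiles01} with dyadic scales is then the defining property of the first profile.

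Next I would subtract the first bubble. Define $W_k^{(1)}$ by \eqref{eq:Sksum} and set $v_k^{(1)}=v_k^{(0)}-W_k^{(1)}$. The two things to verify are: (a) $v_k^{(1)}$ is still bounded in $H^{1,2}(M)$ and still vanishes in $L^p(M)$ (the latter because $\|W_k^{(1)}\|_p\to 0$ for $p\in(2,2^*)$, since the $L^p$-norm of a concentrating bubble with $t_k\to0$ scales like $(t_k^{(1)})^{\frac{N-2}{2}-\frac{N}{p}}\|w^{(1)}\|_p\to0$); and (b) an energy-splitting identity, namely
\[
\int_M|dv_k^{(1)}|^2\,d\mu=\int_M|dv_k^{(0)}|^2\,d\mu-\int_{\R^N}|\nabla w^{(1)}|^2\,dx+o(1),
\]
which follows by writing the cross term in normal coordinates at $y_k^{(1)}$, rescaling, and using the weak convergence \eqref{eq:profiles01} together with the fact that the local metric converges to the Euclidean one on the shrinking ball (this is exactly the computation already carried out in step~4 of the proof of Lemma~\ref{lem:aorth}). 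Now iterate: at stage $n$, either $v_k^{(n)}$ satisfies \eqref{eq:noconc} along the current subsequence, and then $u_k-\sum_{m=1}^n W_k^{(m)}\to0$ in $L^{2^*}(M)$ by Theorem~\ref{thm:vanishing} and the process terminates with finitely many bubbles; or a new nonzero profile $w^{(n+1)}$ is extracted. The energy identity iterated gives $\sum_{m=1}^n\|\nabla w^{(m)}\|_2^2\le\liminf_k\|du_k\|_2^2$ for every $n$, whence the series $\sum_n\|\nabla w^{(n)}\|_2^2$ converges and in particular $\|\nabla w^{(n)}\|_2\to0$; if the process does not terminate, a standard diagonal subsequence extraction produces the full sequence of profiles, and \eqref{eq:Planch01} follows by passing to the limit. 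The mutual decoupling $\mathbf{(AO)}$ I would prove by noting that if \eqref{eq:orth3} failed for some pair $m<n$, then the rescaled sequence defining $w^{(n)}$, expressed around $y_k^{(m)}$ at scale $2^{-j_k^{(m)}}$, would not tend to zero a.e., contradicting that $w^{(n)}$ was extracted from $v_k^{(n-1)}=v_k^{(m)}-\sum_{m<l<n}W_k^{(l)}$, from which $W_k^{(m)}$ had already been removed; the second half of Lemma~\ref{lem:aorth} (equivalence of \eqref{eq:orth3} and \eqref{eq:zlm0}) is the precise tool making this rigorous. Unconditional convergence of $S_k$ in $H^{1,2}(M)$ uniformly in $k$ then follows from $\mathbf{(AO)}$ by a by-now-standard argument: asymptotic orthogonality plus the Bessel-type bound $\sum_n\|W_k^{(n)}\|^2\lesssim\limsup_k\|u_k\|^2$, uniform in $k$, forces the tails to be small uniformly.

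The main obstacle, and the step requiring the most care, is the passage from continuous concentration scales to dyadic ones while keeping the decoupling machinery of Lemma~\ref{lem:aorth} applicable, together with the verification that the iteration actually terminates or else exhausts the defect of compactness in the $L^{2^*}$ sense. The delicate point is that the nonconcentration test \eqref{eq:noconc} quantifies over \emph{all} sequences of points and \emph{all} scales $t_k\to0$, so after subtracting $n$ bubbles one must argue that any surviving concentration of $v_k^{(n)}$ occurs at a scale and location decoupled (in the sense of \eqref{eq:orth3}) from all previously extracted ones — otherwise the weak limit computation would not produce a genuinely new profile and the energy budget would be violated. Making this dichotomy precise, i.e.\ showing that a concentration of $v_k^{(n)}$ coupled to an already-extracted bubble would contradict \eqref{eq:profiles01} for that earlier bubble, is where the two equivalences in Lemma~\ref{lem:aorth} — $\mathbf{(AO)}\iff$ asymptotic orthogonality $\iff$ \eqref{eq:zlm0} — are used in an essential way, and it is the heart of the proof; everything else is the routine bookkeeping of an iterated profile-extraction scheme.
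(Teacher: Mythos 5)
Your overall architecture (extract bubbles one at a time, use Theorem~\ref{thm:vanishing} as the stopping criterion, Lemma~\ref{lem:aorth} for decoupling, an energy budget for \eqref{eq:Planch01}) matches the paper's, and your energy-splitting and (AO)-by-contradiction steps are essentially the arguments the paper carries out. But there is a genuine gap in the extraction scheme itself: you select at stage $n$ \emph{some} nonzero profile witnessing the failure of \eqref{eq:noconc}, rather than a \emph{near-maximal} one. The paper defines $\Xi_n$ as the set of all admissible rescaled a.e.\ limits (over all scales and centers satisfying (AO) with the previously chosen ones), sets $\beta_n=\sup_{w\in\Xi_n}\|w\|_{\dot H^{1,2}}$, and insists on $\|w^{(n)}\|_{\dot H^{1,2}}\ge\tfrac12\beta_n$. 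This is not a cosmetic choice: the Bessel inequality then forces $\beta_\nu\to 0$, and it is precisely this fact that is used (Step 8, Case A of the paper's proof) to show that any surviving concentration in $u_k-S_k$ has profile of norm $\le\beta_\nu$ for every $\nu$, hence zero, so that Theorem~\ref{thm:vanishing} applies to the full residual. In your version, when the iteration does not terminate, knowing $\|\nabla w^{(n)}\|_2\to 0$ for the profiles you happened to pick says nothing about the profiles you did not pick: the residual could retain a concentration of fixed positive size forever while your algorithm keeps extracting ever smaller, irrelevant bubbles. Your sentence ``a standard diagonal subsequence extraction produces the full sequence of profiles'' papers over exactly this point; without the supremum mechanism the claim that $u_k-S_k$ satisfies \eqref{eq:noconc} is unjustified.

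A secondary, related omission: verifying \eqref{eq:noconc} for $u_k-S_k$ with $S_k$ an \emph{infinite} series also requires the unconditional and $k$-uniform convergence of $\sum_n W_k^{(n)}$ (so that the tail contributes nothing at any test scale/center), and that convergence is itself derived from $\beta_\nu\to 0$ via the argument of \cite{SoliTi}. You invoke uniform convergence only at the end, as a consequence of (AO) plus the Bessel bound, but it is needed earlier, inside the verification that the residual passes the nonconcentration test. Incorporating the near-maximal selection rule would repair both issues and bring your proof in line with the paper's.
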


\begin{proof}
The proof is analogous to the proofs for profile decompositions in
\cite{SkrTi4,SchinTin,Solimini,SoliTi} and is given in a somewhat
abbreviated form.

1. Consider any sequences $(j_{k}^{(1)})$ in $\N$, $j_{k}^{(1)}\to\infty$,
and $(y_{k}^{(1)})$ in $M$. The sequence $2^{-j_{k}^{(1)}\frac{N-2}{2}}u_{k}\circ e_{y_{k}^{(1)}}(2^{-j_{k}^{(1)}}\cdot)$
is a bounded sequence in $H^{1,2}(\Omega_{a})$ for any $a\in(0,r(M))$ and
thus has a weakly convergent subsequence there. Consequently, by diagonalization,
there is a renamed subsequence of $(u_{k})$ such that $2^{-j_{k}^{(1)}\frac{N-2}{2}}u_{k}\circ e_{y_{k}^{(1)}}(2^{-j_{k}^{(1)}}\cdot)$
converges almost everywhere on $\R^{N}$to some $w^{(1)}$. Note that,
since $M$ is of bounded geometry, $\|\nabla w^{(1)}\|_{2}^{2}\le\limsup C\int_{B_{r}(y_{k}^{(1)})}|du_{k}|^{2}d\mu<\infty$ with the constant $C$ independent of the sequence $(y_k^{(n)})_{k\in\N}$.
Thus it suffices to prove the theorem for the case $w^{(1)}\neq 0$. Let $\Xi_1$ be a set of all $w\in\dot H^{1,2}(M)$ such that  
\[
 2^{-j_{k}^{(1)}\frac{N-2}{2}}u_{k}\circ e_{y_{k}^{(1)}}(2^{-j_{k}^{(1)}}\to w \mbox{ a.e. in } \R^N
\]
for some choice of $j_{k}^{(1)}\to\infty$
and $(y_{k}^{(1)})$ in $M$, and let
\[
\beta_1=\sup_{w\in \Xi_1}\|w\|_{\dot{H}^{1,2}}
\]
and fix an element $w^{(1)}\in \Xi$ and corresponding sequences $j_{k}^{(1)}$ and $(y_{k}^{(1)})$ so that $\|w^{(1)}\|_{\dot{H}^{1,2}}\ge \frac12 \beta_1$.
If $\beta_1=0$, then by Theorem~\ref{thm:vanishing} $u_k\to 0$ in $L^{2^*}(M)$ and the theorem is proved. We consider therefore the case $\beta_1>0$. 

2. Let us now make a recursive definition of sequences $j_{k}^{(n)}\to\infty$,  $(y_{k}^{(n)})$ in $M$, functions $w^{(n)}$ and numbers $\beta_n$, $n\in\N$.
Let $\nu\in\N$ and assume that for every $n=1,\dots,\nu$ we have constructed the elementary concentrations for the sequence $(u_k)$, that is, we make the following assumptions.

Assume, for $n=1,\dots,\nu$, that we have a renamed subsequence of $(u_{k})$, sequences $(j_{k}^{(n)})$ in $\N$, $j_{k}^{(n)}\to\infty$, and $(y_{k}^{(n)})$ in $M$. Assume that functions
$w^{(n)}\in\dot{H}^{1,2}(\R^{N})$ satisfy \eqref{eq:profiles01}. Assume that (AO) is satisfied for $m,n=1,\dots,\nu$. Furthermore, let $\Xi_n$ be a set of all $w\in\dot H^{1,2}(M)$ such that  
\[
 2^{-j_{k}^{(n)}\frac{N-2}{2}}u_{k}\circ e_{y_{k}^{(n)}}(2^{-j_{k}^{(n)}})\to w \mbox{ a.e. in } \R^N
\]
for some choice of $j_{k}^{(n)}\to\infty$
and $(y_{k}^{(n)})$ in $M$ satisfying (AO) for $m<n=1,\dots,\nu$, and let
\[
\beta_n=\sup_{w\in \Xi_n}\|w\|_{\dot{H}^{1,2}}
\]
and fix an element $w^{(n)}\in \Xi$ and corresponding sequences $j_{k}^{(n)}$ and $(y_{k}^{(n)})$ so that $\|w^{(n)}\|_{\dot{H}^{1,2}}\ge \frac12 \beta_n$.

We now set $S_{k}^{(\nu)}=\sum_{n=1}^{\nu}W_{k}^{(n)}$ and $v_{k}^{(\nu)}=u_{k}-S_{k}^{(\nu)}$.
Consider sequences $(j_{k}^{(\nu+1)})$ in $\N$, $j_{k}^{(\nu+1)}\to\infty$,
and $(y_{k}^{(\nu+1)})$ in $M$. As in the Step 1, we have a renamed
subsequence of $(u_{k})$ such that $2^{-j_{k}^{(\nu+1)}\frac{N-2}{2}}u_{k}\circ e_{y_{k}^{(\nu+1)}}(2^{-j_{k}^{(\nu+1)}}\cdot)$
converges almost everywhere on $\R^{N}$ to some $w^{(\nu+1)}\in\dot{H}{}^{1,2}(\R^{N})$. As before, we consider the class $\Xi_{\nu+1}$ of all such limits, and fix $w^{(\nu+1)}$ and corresponding  $j_{k}^{(\nu+1)}$ and $(y_{k}^{(\nu+1)})$ so that 
 $\|w^{(\nu+1)}\|_{\dot{H}^{1,2}}\ge \frac12 \beta_{\nu+1}$.




3. Note that for every $n\le\nu$,
\begin{equation}
2^{-j_{k}^{(n)}\frac{N-2}{2}}W_{k}^{(n)}\circ e_{y_{k}^{(n)}}(2^{-j_{k}^{(n)}}\xi)=\chi(2^{-j_{k}^{(n)}}\xi)w^{(n)}(\xi)\to w^{(n)}(\xi)\mbox { a.e. in }\R^N,
\end{equation}
and for each $n'\le\nu$, $n'\neq n$, by (AO) and Lemma~\ref{lem:aorth},
\begin{equation*}
2^{-j_{k}^{(n)}\frac{N-2}{2}}W_{k}^{(n')}\circ e_{y_{k}^{(n)}}(2^{-j_{k}^{(n)}}\xi)\to 0\mbox{ a.e. in }\R^N,
\end{equation*}
so for every $n\le\nu$,
 \begin{equation}
  2^{-j_{k}^{(n)}\frac{N-2}{2}}S^{(\nu)}_{k}\circ e_{y_{k}^{(n)}}(2^{-j_{k}^{(n)}}\xi)\to w^{(n)}(\xi) \mbox{ a.e. in } \R^N,
 \end{equation}
and thus 
\begin{align}\nonumber
2^{-j_{k}^{(n)}\frac{N-2}{2}}v_{k}^{(\nu)}\circ e_{y_{k}^{(n)}}(2^{-j_{k}^{(n)}}\xi)=
\\
\label{eq:proj4}
2^{-j_{k}^{(n)}\frac{N-2}{2}}(u_{k}-S_{k}^{(\nu)})\circ e_{y_{k}^{(n)}}(2^{-j_{k}^{(n)}}\xi)
\to 0  \mbox{ a.e. in }\R^N.
\end{align}

4. Let us show that (AO) is satisfied for $m=1,\dots,\nu$ and $n=\nu+1$
(or vice versa). Once we show this, we will have completed the construction
of $(y_{k}^{(n)})_{k\in\N}$ in $M$ and of numbers $(j_{k}^{(n)})_{k\in N}$
in $\N$, $j_{k}^{(n)}\to+\infty$, as well as functions $w^{(n)}\in\dot{H}^{1,2}(\R^{N})$,
$n\in\N,$ such that, for a renamed subsequence of $(u_{k}),$ condition
(AO) is satisfied for all $n\in\N$. If $w^{(\nu+1)}=0$, then, since $\beta_{\nu+1}=0$ we are free replace $(y_{k}^{(\nu+1)})_{k\in\N}$ in $M$ and $(j_{k}^{(\nu+1)})_{k\in N}$ with any sequence that saisfies \eqref{eq:orth3} for respective scaling sequences, namely
\[
|j_k^{(\nu+1)}-j_k^{(n)}| \ + \ (2^{j_k^{\nu+1}}+2^{j_k^{(n)}}) d(y_k^{(\nu+1)},y_k^{(n)}) \rightarrow \infty, n=1,\dots\nu.
\]
The renamed  $w^{(\nu+1)}$ will be necessarily zero since $\beta_{\nu+1}=0$.

We now may assume that  $w^{(\nu+1)}\neq 0$.
Assume that (AO) does not hold
for indices $\nu+1$ and some $\ell\le\nu$. Then there exist $m\in\Z$ and $\lambda\in\R$,
such that, on a renamed subsequence, $2^{j_{k}^{(\ell)}}d(y_{k}^{(\ell)},y_{k}^{(\nu+1)})$ is bounded
and $j_{k}^{(\nu+1)}=j_{k}^{(\ell)}-m$. Let  $\psi_k=e_{y_{k}^{(\ell)}}^{-1}\circ e_{y_{k}^{(\nu+1)}}$. Note that $d(y_{k}^{(\ell)},y_{k}^{(\nu+1)})\to 0$, and since $M$ has bounded geometry, on a renamed subsequence we have $\psi_k$ convergent uniformly, together with its derivatives of every order, and its limit is the identity map. Also on a renamed subsequence we have an $\eta_0\in \R^N$ such that
$2^{j_{k}^{(\nu+1)}}\psi_k(0)\to \eta_0$ (since $|\psi_k(0)|=d(y_{k}^{(\nu+1)},y_{k}^{(\ell)})$). 
Then we have, uniformly on compact subsets of $\R^N$, 
\begin{align*}
	2^{j_{k}^{(\ell)}}\psi_k(2^{-j_{k}^{(\nu+1)}}\xi)=
	\\
2^{m}2^{j_{k}^{(\nu+1)}}\psi_k(0)+2^{m}\psi_k'(0)\xi+2^{j_{k}^{(\ell)}}O(2^{-2j_{k}^{(\nu+1)}}\xi)\to 2^{m}\eta_0+2^m\xi.
\end{align*}
Note also that for any $n=\ell+1,\dots,\nu$ from (AO) and Lemma~\ref{lem:aorth} we have 
\[
2^{j_{k}^{(\nu+1)}\frac{N-2}{2}}W_k^{(n)}\circ e_{y_{k}^{(\nu+1)}}(2^{-2j_{k}^{(\nu+1)}}\xi)\to 0.
\]

Then, substituting the two last calculations into the expression below, we have
\begin{align*}
	2^{j_{k}^{(\nu+1)}\frac{N-2}{2}}v_k^{(\nu)}\circ e_{y_{k}^{(\nu+1)}}(2^{-2j_{k}^{(\nu+1)}}\xi)=
	\\
	2^{j_{k}^{(\nu+1)}\frac{N-2}{2}}v_k^{(\ell)}\circ e_{y_{k}^{(\nu+1)}}(2^{-2j_{k}^{(\nu+1)}}\xi)+o(1)=
	\\
	2^{-m\frac{N-2}{2}}
	2^{j_{k}^{(\ell)}\frac{N-2}{2}}v_k^{(\ell)}\circ e_{y_{k}^{(\ell)}}\circ \psi_k(2^{-2j_{k}^{(\nu+1)}}\xi)=\\
	2^{-m\frac{N-2}{2}}
	2^{j_{k}^{(\ell)}\frac{N-2}{2}}v_k^{(\ell)}\circ e_{y_{k}^{(\ell)}}(2^{j_{k}^{(-\ell)}} 
	[2^{j_{k}^{(\ell)}}\psi_k(2^{-2j_{k}^{(\nu+1)}}\xi)])=
	\\
	2^{-m\frac{N-2}{2}}
	2^{j_{k}^{(\ell)}\frac{N-2}{2}}v_k^{(\ell)}\circ e_{y_{k}^{(\ell)}}(2^{j_{k}^{(-\ell)}} 
	[2^m\eta_0+2^m\xi+o(1)])	
	\\
	\to 0,
	\end{align*}
by \eqref{eq:proj4}, 
which by definition of $w^{(\nu+1)}$ implies $w^{(\nu+1)}=0$, a
contradiction.\\

6. Let us expand by bilinearity a trivial inequality \\$\int_{M}|d(u_{k}-S_{k}^{(\nu)})|^{2}d\mu\ge0$.
We will have
\begin{equation}
\int_{M}|du_{k}|^{2}d\mu\ge2\int_{M}du_{k}\cdot dS_{k}^{(\nu)}d\mu-I_{k}-I'_{k},\label{eq:trivpos}
\end{equation}

where 
\[
I_{k}=\sum_{n=1}^{\nu}\int_{M}\left|d\left(2^{j_{k}^{(n)}\frac{N-2}{2}}\,\chi\circ e_{y_{k}^{(n)}}^{-1}(x)\,w^{(n)}(2^{j_{k}^{(n)}}e_{y_{k}^{(n)}}^{-1}(x))\right)\right|^{2}d\mu,
\]
and 
\[
I'_{k}=\sum_{m\neq n,\,m,n=1,\dots\nu}\int_{M}d\left(2^{j_{k}^{(n)}\frac{N-2}{2}}\,\chi\circ e_{y_{k}^{(n)}}^{-1}(x)\,w^{(n)}(2^{j_{k}^{(n)}}e_{y_{k}^{(n)}}^{-1}(x))\right)
\cdot\]
\[d\left(2^{j_{k}^{(m)}\frac{N-2}{2}}\,\chi\circ e_{y_{k}^{(m)}}^{-1}(x)\,w^{(m)}(2^{j_{k}^{(m)}}e_{y_{k}^{(m)}}^{-1}(x))\right)d\mu.
\]
The first term in (\ref{eq:trivpos}) can be evaluated by writing
the integration in rescaled geodesic coordinates $\xi=2^{j_{k}^{(n)}}e_{y_{k}^{(n)}}^{-1}(x)$
and denoting by %
$o(1)$ any term vanishing uniformly in $\R^{N}$ as $k\to\infty$, we 
$$\begin{array}{l}
 \int_{M}du_{k}\cdot dS_{k}^{(\nu)}d\mu=
\\
\sum_{n=1}^{\nu}2^{j_{k}^{(n)}\frac{N-2}{2}}\,\int_{B_{r}(y_{k}^{(n)})}du_{k}\cdot d\left(\chi\circ e_{y_{k}^{(n)}}^{-1}(x)\,w^{(n)}(2^{j_{k}^{(n)}}e_{y_{k}^{(n)}}^{-1}(x))\right)d\mu
\\
=\sum_{n=1}^{\nu}2^{-j_{k}^{(n)}\frac{N-2}{2}}\int_{\Omega_{2^{j_{k}^{(n)}}r}}\sum_{\alpha,\beta=1}^{N}(\delta_{\alpha\beta}+o(1))\partial_{\alpha}u_{k}\circ e_{y_{k}^{(n)}}(2^{-j_{k}^{(n)}}\xi)\times
\\
\left(\chi(2^{-j_{k}^{(n)}}\xi)\partial_{\beta}w^{(n)}(\xi)+o(1)
\partial_{\beta}w^{(n)}(\xi)\right)d\xi\\
=\sum_{n=1}^{\nu}\int_{\R^{N}}|\nabla w^{(n)}(\xi)|^{2}d\xi \ + \  o(1)
\end{array}$$
\\
An analogous evaluation of $I_{k}$ gives, $I_{k}=$
$$
\begin{array}{l}
\sum_{n=1}^{\nu}2^{-j_{k}^{(n)}\frac{N-2}{2}}\int_{\Omega_{2^{j_{k}^{(n)}}r}}\sum_{\alpha,\beta=1}^{N}
\left(\chi(2^{-j_{k}^{(n)}}\xi)\partial_{\alpha}w^{(n)}(\xi)+o(1)\partial_{\alpha}w^{(n)}(\xi)\right)\times
\\
\left(\chi(2^{-j_{k}^{(n)}}\xi)\partial_{\beta}w^{(n)}(\xi)+o(1)\partial_{\beta}w^{(n)}(\xi)\right)(\delta_{\alpha\beta}+o(1))d\xi \\
= \sum_{n=1}^{\nu}\int_{\R^{N}}|\nabla w^{(n)}(\xi)|^{2}d\xi \ + \ o(1).
\end{array}$$
By (AO) and Lemma \ref{lem:aorth}, we have $I'_{k}\to0$. Consequently,
(\ref{eq:trivpos}) implies
\begin{align*}
\int_{M}|du_{k}|^{2}d\mu\ge2\sum_{n=1}^{\nu}\int_{\R^{N}}|\nabla w^{(n)}(\xi)|^{2}d\xi-\sum_{n=1}^{\nu}\int_{\R^{N}}|\nabla w^{(n)}(\xi)|^{2}d\xi+o(1)
\\
=\sum_{n=1}^{\nu}\int_{\R^{N}}|\nabla w^{(n)}(\xi)|^{2}d\xi+o(1).\label{eq:prePl}
\end{align*}

Since $\nu$ is arbitrary, we have (\ref{eq:Planch01}). 

7. Inequality (\ref{eq:Planch01}) implies that $\beta_{\nu}\to0$
as $\nu\to\infty$. Then, following the argument of \cite{SoliTi} with only trivial modifications,
one can show that, for a suitably renamed sequence, the series $S_{k}$
converges in $H^{1,2}(M)$ unconditionally in $n$ and uniformly in
$k$. 

8. Let us finally show that the sequence $(S_k)$  gives indeed the defect of compactness. Let $j_{k}\in\N$, $j_{k}\to\infty$ and let $y_{k}\in M$.
It suffices to consider two cases.

Case A. For every $n\in\N$, $j_{k}^{(n)}, j_k$ and $y_{k}^{(n)},y_{k}$
satisfy the condition (\ref{eq:orth3}).
Then $2^{-j_{k}\frac{N-2}{2}}S_{k}\circ e_{y_{k}}(2^{-j_{k}}\cdot)\to0$
a.e., as this is true for each term in the series of $S_{k}$ by Lemma
\ref{lem:aorth}, and the convergence in the series for $S_{k}$
is uniform. On the other hand, if there is a renamed subsequence such
that $2^{-j_{k}\frac{N-2}{2}}u_{k}\circ e_{y_{k}}(2^{-j_{k}}\cdot)\to w$
a.e., we will have necessarily $\|\nabla w\|_{2}\le\beta_{\nu}$ for
every $\nu\in\N$, i.e. $w=0$. We conclude that $2^{-j_{k}\frac{N-2}{2}}(u_{k}-S_{k})\circ e_{y_{k}}(2^{-j_{k}}\cdot)\to0$
a.e. in this case.

Case B. Without loss of generality we may assume that for some $\ell\in\N$ $j_{k}-j_{k}^{(\ell)}=m\in\Z$ and that $2^{j_k}d(y_{k}^{(\ell)},y_{k})$  is bounded. Then, repeating the argument of Step 4, we have 
\[
2^{-j_{k}\frac{N-2}{2}}(u_{k}-S_{k}^{(\ell)}\circ e_{y_{k}}(2^{-j_{k}}\cdot))\to0\mbox{ a.e. },
\]
while by (AO), Lemma~\ref{lem:aorth} and the uniform convergence of the series
$S_{k}$, 
\[
2^{-j_{k}\frac{N-2}{2}}((S_{k}-S_{k}^{(\ell)})\circ e_{y_{k}}(2^{-j_{k}}\cdot))\to0\mbox{ a.e.},
\]
which implies that $2^{-j_{k}\frac{N-2}{2}}(u_{k}-S_{k})\circ e_{y_{k}}(2^{-j_{k}}\cdot)\to0$
a.e. in this case as well.

Consequently, by Theorem \ref{thm:vanishing} we have $u_{k}-S_{k}\to0$
in $L^{2^{*}}.$ 
\end{proof}

We supplement Theorem \ref{thm:PD-blowupsonly} with the estimate
of $L^{2^{*}}$-norms.
\begin{prop}
\label{prop:BL*}Let $u_{k}$ and $w^{(n)}$, $n\in\N$, be provided
by Theorem \ref{thm:PD-blowupsonly}. Then
\[
\int_{M}|u_{k}|^{2^{*}}d\mu\to\sum_{n\in\N}\int_{\R^{N}}|w^{(n)}|^{2^{*}}dx.
\]
\end{prop}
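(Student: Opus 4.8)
The plan is to establish a Brezis--Lieb-type decoupling for the bubble sum $S_k=\sum_{n}W_k^{(n)}$ and then transfer it to $u_k$ using that $u_k-S_k\to0$ in $L^{2^*}(M)$. First I would reduce to $S_k$ and then to a finite sum. Since $u_k-S_k\to0$ in $L^{2^*}(M)$ by \eqref{eq:remainder}, while $(u_k)$ is bounded in $L^{2^*}(M)$ by the continuity of $H^{1,2}(M)\hookrightarrow L^{2^*}(M)$ and $(S_k)$ is bounded in $L^{2^*}(M)$ by the uniform convergence of the series together with \eqref{eq:Planch01}, the elementary bound $\bigl||a|^{2^*}-|b|^{2^*}\bigr|\le C\bigl(|a|^{2^*-1}+|b|^{2^*-1}\bigr)|a-b|$ and H\"older's inequality give $\int_M|u_k|^{2^*}d\mu-\int_M|S_k|^{2^*}d\mu\to0$. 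Writing $S_k^{(\nu)}=\sum_{n=1}^{\nu}W_k^{(n)}$, the uniform-in-$k$ convergence of the series in $H^{1,2}(M)$ and the Sobolev embedding give $\sup_k\|S_k-S_k^{(\nu)}\|_{2^*}\to0$ as $\nu\to\infty$, hence $\sup_k\bigl|\int_M|S_k|^{2^*}d\mu-\int_M|S_k^{(\nu)}|^{2^*}d\mu\bigr|\to0$; meanwhile $\sum_n\|w^{(n)}\|_{2^*}^{2^*}<\infty$ by combining \eqref{eq:Planch01}, the Sobolev inequality on $\R^N$ and boundedness of $\|\nabla w^{(n)}\|_2$, so $\sum_{n=1}^{\nu}\|w^{(n)}\|_{2^*}^{2^*}\to\sum_n\|w^{(n)}\|_{2^*}^{2^*}$. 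An $\varepsilon/3$ argument then reduces the statement to proving, for each fixed $\nu$,
\[
\int_M\Bigl|\sum_{n=1}^{\nu}W_k^{(n)}\Bigr|^{2^*}d\mu\longrightarrow\sum_{n=1}^{\nu}\int_{\R^N}|w^{(n)}|^{2^*}dx .
\]

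For this finite sum, a single bubble is handled by the change of variables $\xi=2^{j_k^{(n)}}e_{y_k^{(n)}}^{-1}(x)$: since $\tfrac{N-2}{2}\cdot2^*=N$ all powers of $2^{j_k^{(n)}}$ cancel, and because $\chi(2^{-j_k^{(n)}}\xi)\to\chi(0)=1$ and, by bounded geometry, $\sqrt{g\bigl(e_{y_k^{(n)}}(2^{-j_k^{(n)}}\xi)\bigr)}\to1$ locally uniformly, dominated convergence (with majorant $C|w^{(n)}|^{2^*}$) yields $\int_M|W_k^{(n)}|^{2^*}d\mu\to\int_{\R^N}|w^{(n)}|^{2^*}dx$. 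The cross terms are absorbed by the elementary inequality $\bigl||\sum_{n=1}^{\nu}a_n|^{2^*}-\sum_{n=1}^{\nu}|a_n|^{2^*}\bigr|\le C_\nu\sum_{m\neq n}|a_m|\,|a_n|^{2^*-1}$, so everything reduces to showing
\[
\int_M|W_k^{(m)}|^{a}\,|W_k^{(n)}|^{b}\,d\mu\longrightarrow0\qquad\text{for }m\neq n,\ a,b>0,\ a+b=2^* .
\]

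The decoupling estimate in the last display is the heart of the matter. By density of $C_c^\infty(\R^N)$ in $\dot H^{1,2}(\R^N)$ and the uniform-in-$k$ bound $\|W_k-\widetilde W_k\|_{2^*}\le C\|w-\widetilde w\|_{\dot H^{1,2}}$ (immediate from the single-bubble change of variables just used), I may assume $w^{(m)},w^{(n)}\in C_c^\infty(\R^N)$; I may also assume $d(y_k^{(m)},y_k^{(n)})\to0$, since otherwise, along a subsequence with $\inf_k d(y_k^{(m)},y_k^{(n)})>0$, the supports of the two bubbles are eventually disjoint. Passing to the coordinates $\xi=2^{j_k^{(m)}}e_{y_k^{(m)}}^{-1}(x)$ and using $a+b=2^*$, $\tfrac{N-2}{2}\cdot2^*=N$, I get
\[
\int_M|W_k^{(m)}|^{a}|W_k^{(n)}|^{b}\,d\mu\le C\,2^{-(j_k^{(m)}-j_k^{(n)})\frac{N-2}{2}b}\int_{\mathrm{supp}\,w^{(m)}}|w^{(m)}(\xi)|^{a}\,\bigl|w^{(n)}\bigl(2^{j_k^{(n)}}\psi_k(2^{-j_k^{(m)}}\xi)\bigr)\bigr|^{b}\,d\xi ,
\]
where $\psi_k=e_{y_k^{(n)}}^{-1}\circ e_{y_k^{(m)}}$ converges, with all derivatives, locally uniformly to the identity (bounded geometry). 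By \eqref{eq:orth3}, after passing to a subsequence there are two cases. If $|j_k^{(m)}-j_k^{(n)}|\to\infty$, say $j_k^{(m)}-j_k^{(n)}\to+\infty$, the prefactor tends to $0$ and $\xi\mapsto2^{j_k^{(n)}}\psi_k(2^{-j_k^{(m)}}\xi)$ carries $\mathrm{supp}\,w^{(m)}$ into a set $E_k$ of diameter $O(2^{j_k^{(n)}-j_k^{(m)}})$, so $|E_k|\to0$; H\"older in $\xi$, the substitution $\zeta=2^{j_k^{(n)}}\psi_k(2^{-j_k^{(m)}}\xi)$, and the fact that after this substitution the powers of $2$ cancel exactly bound the left-hand side by $C\bigl(\int_{E_k}|w^{(n)}|^{2^*}\bigr)^{b/2^*}\to0$ by absolute continuity of the integral. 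If instead $|j_k^{(m)}-j_k^{(n)}|$ stays bounded, pass to a subsequence on which $j_k^{(m)}-j_k^{(n)}$ is constant; then \eqref{eq:orth3} forces $2^{j_k^{(n)}}d(y_k^{(m)},y_k^{(n)})\to\infty$, so $2^{j_k^{(n)}}\psi_k(2^{-j_k^{(m)}}\xi)$ leaves every compact set uniformly on $\mathrm{supp}\,w^{(m)}$, and the same substitution bounds the left-hand side by $C\bigl(\int_{|\zeta|\ge\rho_k}|w^{(n)}|^{2^*}\bigr)^{b/2^*}\to0$ with $\rho_k\to\infty$. Combining the three parts gives the proposition. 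I expect the separated-scale regime $|j_k^{(m)}-j_k^{(n)}|\to\infty$ with nearly coinciding centers to be the main obstacle: there the bubbles are nested rather than disjointly supported and the rescaling factors cancel exactly, so the decay must be extracted from the vanishing of $\int|w^{(n)}|^{2^*}$ over a shrinking (or escaping) set, and handling non-compactly supported profiles is precisely what forces the preliminary density reduction.
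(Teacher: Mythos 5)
Your proof is correct and follows essentially the same route as the paper: reduce to the bubble sum via \eqref{eq:remainder}, kill the cross terms using the asymptotic decoupling \eqref{eq:orth3} after a density reduction to compactly supported profiles, and compute each single-bubble integral by the rescaled geodesic change of variables with dominated convergence. The only difference is that you spell out in full the finite-sum truncation and the case analysis for the cross terms, which the paper compresses into ``it is easy to show.''
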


\begin{proof}
By (\ref{eq:remainder}) it suffices to show that
\[
\int_{M}|\sum_{n\in\N}W_{k}^{(n)}|^{2^{*}}d\mu\to\sum_{n\in\N}\int_{\R^{N}}|w^{(n)}|^{2^{*}}dx.
\]

Since sequences $(W_{k}^{(n)})_{k\in\N}$ have asymptotically disjoint
supports in the sense of (\ref{eq:orth3}), and since one can by density of $C_c(\R^N)$ in $\dot H^{1,2}(\R^N)$ assume that every profile $w^{(n)}$ has compact support, it is easy to show that
\[
\int_{M}|\sum_{n\in\N}W_{k}^{(n)}|^{2^{*}}d\mu-\sum_{n\in\N}\int_{M}|W_{k}^{(n)}|^{2^{*}}d\mu\to0,
\]
so it suffices to show that for each $n\in\N$
\[
\int_{M}|W_{k}^{(n)}|^{2^{*}}d\mu\to\int_{\R^{N}}|w^{(n)}|^{2^{*}}dx.
\]
Indeed, passing to geodesic coordinates at $y_{k}^{(n)}$, and subsequently
rescaling them, we have
\[
\int_{M}|W_{k}^{(n)}|^{2^{*}}d\mu=2^{j_{k}^{(n)}N}\int_{M}\,|\chi\circ e_{y_{k}^{(n)}}^{-1}(x)\,w^{(n)}(2^{j_{k}^{(n)}}e_{y_{k}^{(n)}}^{-1}(x))|^{2^{*}}d\mu=
\]
\[
2^{j_{k}^{(n)}N}\int_{\Omega_{\rho}}\,|\chi(\xi)\,w^{(n)}(2^{j_{k}^{(n)}}\xi)|^{2^{*}}\sqrt{g(\xi)}d\xi=\]
\[\int_{_{\R^{N}}}|\chi(2^{-j_{k}^{(n)}}\eta)\,w^{(n)}(\eta)|^{2^{*}}\sqrt{g(2^{-j_{k}^{(n)}}\eta)}d\eta\to\int_{_{\R^{N}}}|w^{(n)}(\eta)|^{2^{*}}d\eta.
\]
Passing to the limit at the last step is justified by Lebesgue dominated
convergence theorem, and by the fact that in normal coordinates $g(0)=1$. 
\end{proof}

\section{Profile decomposition for $H^{1,2}(M)\protect\hookrightarrow L^{2^{*}}(M)$}

\subsection{Profile decomposition for $H^{1,2}(M)\protect\hookrightarrow L^{p}(M)$,
subcritical case}

In order to quote the profile decomposition for the subcritical Sobolev
embedding from \cite{SkrTi4}, we need to refer to Appendix for definitions of the objects involved there. 
\vskip2mm 
\noindent
1. Discretization $Y_{\rho}$ of a metric space $M$, $\rho>0$, is an
at most countable subset of $M$ such that 
\begin{equation}
d(y,y')\ge\rho/2\mbox{ whenever }y\neq y',y,y'\in Y_{\rho}\label{eq:discrete-1}
\end{equation}
 the collection of balls $\lbrace B(y,\rho)\rbrace_{y\in Y_{\rho}}$
is a covering of $M$, and for any $a\ge\rho$ the multiplicity of
the covering $\lbrace B(y,a)\rbrace_{y\in Y}$ is uniformly finite.
Manifolds of bounded geometry always admit a discretization for sufficiently
small $\rho$, see Appendix.

\noindent
2. Manifold at infinity $M_{\infty}^{(y_{k})}$, of a manifold $M$ with
finite geometry, associated with a sequence $(y_{k})$ from a discretization
$Y_{\rho}$ of $M$ is a Riemannian manifold defined in the Appendix.
Like $M$ itself, $M_{\infty}^{(y_{k})}$ has an atlas $\lbrace\varphi_{i}^{(y_{k})},\Omega_{\rho}\rbrace_{i\in\N}$
with identical coordinate neighborhoods $\Omega_{\rho}$. 

\noindent 
3. Notation $M_{\infty}^{(y_{k})}$ is somewhat abbreviated as the construction of the manifold at infinity is in fact dependent on the
choice of a trailing system $\lbrace(y_{k,i})_{k\in\N}\rbrace_{i\in\N}$
of the sequence $(y_{k})$. For each $k\in\N$, $\lbrace y_{k,i}\rbrace_{i\in\N}$
is an ordering (not necessarily unique) of $Y_{\rho}$ by distance
from the point $y_{k}$.
\begin{thm}
\label{thm:pd-subcr}Let $M$ be a manifold of bounded geometry with
a discretization $Y_{\rho}$, $\rho<r(M)/8$, and let $(u_{k})$ be
a sequence in $H^{1,2}(M)$ weakly convergent to some function $u$
in $H^{1,2}(M)$. Then there exists a renamed subsequence of $(u_{k})$,
sequences $(y_{k}^{(n)})_{k\in\N}$ on $Y_{\rho}$, and associated
with them global profiles $w^{(n)}$ on the respective manifolds at
infinity $M_{\infty}^{(n)}\eqdef M_{\infty}^{(y_{k}^{(n)})}$, $n\in\N$,
such that $d(y_{k}^{(n)},y_{k}^{(m)})\to\infty$ when $n\neq m$,
\begin{equation}
u_{k}-u-\sum_{n\in\N}W_{k}^{(n)}\to0\mbox{ in }L^{p}(M),\;p\in(2,2^{*}),\label{eq:PD}
\end{equation}
where 
\[
W_{k}^{(n)}=\sum_{i\in\N_{0}}\chi\circ e_{y_{k,i}^{(n)}}^{-1}w_{i}^{(n)}\circ e_{y_{k,i}^{(n)}}^{-1},
\]	
\[
w_{i}^{(n)}=w^{(n)}\circ\varphi_{i}^{(n)}=\wlim u_k\circ e_{y_{k,i}}^{(n)},
\] 
$\lbrace\varphi_{i}^{(n)},\Omega_{\rho}\rbrace_{i\in\N}$
is the atlas of $M_{\infty}^{(n)}$, and the series $\sum_{n\in\N}W_{k}^{(n)}$
converges in $H^{1,2}(M)$ unconditionally (with respect to $n$) and uniformly in $k\in\N$.
Moreover, 
\[
\|u\|_{H^{1,2}(M)}^{2}+\sum_{n=1}^{\infty}\|w^{(n)}\|_{H^{1,2}(M_{\infty}^{(n)})}^{2}+\|u_{k}-u-\sum_{n\in\N}W_{k}^{(n)}\|_{H^{1,2}(M)}^{2}\]
\begin{equation}
\le\limsup\|u_{k}\|_{H^{1,2}(M)}^{2}\ ,\label{eq:Plancherel}
\end{equation}
and 
\begin{equation}
\int_{M}|u_{k}|^{p}d\mu\to\int_{M}|u|^{p}d\mu+\sum_{n=1}^{\infty}\int_{M_{\infty}^{(n)}}|w^{(n)}|^{p}d\mu_{\infty}^{(n)}.\label{eq:newBL}
\end{equation}
\end{thm}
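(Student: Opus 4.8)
Since this statement is quoted from \cite{SkrTi4}, I will only sketch a proof; the plan is to run the general cocompactness-to-profile-decomposition scheme of \cite{SoliTi,SchinTin,Solimini}, adapted to the manifold-at-infinity formalism of the Appendix. The cornerstone is a \emph{subcritical vanishing lemma}, an unrescaled analogue of Theorem~\ref{thm:vanishing}: if $(v_{k})$ is bounded in $H^{1,2}(M)$ and $v_{k}\circ e_{y_{k}}\rightharpoonup0$ in $H^{1,2}(\Omega_{\rho})$ for every sequence $(y_{k})$ in the discretization $Y_{\rho}$, then $v_{k}\to0$ in $L^{p}(M)$, $p\in(2,2^{*})$. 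I would prove this as in Step~3 of the proof of Theorem~\ref{thm:vanishing}: on each ball $B(y,\rho)$ interpolate $\|v_{k}\|_{L^{p}(B(y,\rho))}$ between $L^{2}$ and $L^{2^{*}}$, bound the $L^{2^{*}}$-factor by $\|v_{k}\|_{H^{1,2}(B(y,2\rho))}$ with a Sobolev constant uniform in $y$ (bounded geometry, Definition~\ref{def:bg}), then sum over $Y_{\rho}$ using the uniformly finite multiplicity of the covering (Lemma~\ref{lem:covering}); this reduces the conclusion to $\sup_{y\in Y_{\rho}}\|v_{k}\|_{L^{2}(B(y,\rho))}\to0$, which holds by the hypothesis together with local Rellich compactness.

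Given this, I would extract profiles iteratively. Write $u_{k}\rightharpoonup u$ and set $v_{k}^{(0)}=u_{k}-u$; if $v_{k}^{(0)}\to0$ in $L^{p}(M)$ there are no profiles. Otherwise the contrapositive of the vanishing lemma yields a sequence $(y_{k}^{(1)})$ in $Y_{\rho}$ whose trailing system, after a subsequence and a diagonalization over the trailing index $i$, gives weak limits $w_{i}^{(1)}=\wlim u_{k}\circ e_{y_{k,i}^{(1)}}$ not all zero. Bounded geometry makes the transition maps $e_{y_{k,i}^{(1)}}^{-1}\circ e_{y_{k,j}^{(1)}}$ convergent, which is exactly what makes $\{w_{i}^{(1)}\}$ the coordinate representation of a single $w^{(1)}\in H^{1,2}(M_{\infty}^{(1)})$ on the manifold at infinity $M_{\infty}^{(1)}=M_{\infty}^{(y_{k}^{(1)})}$, with $\|w^{(1)}\|_{H^{1,2}(M_{\infty}^{(1)})}^{2}\le\limsup_{k}\|u_{k}\|^{2}$ by weak lower semicontinuity summed over charts. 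To exhaust all concentration I would let $\beta_{1}$ be the supremum of $\|w\|_{H^{1,2}(M_{\infty})}$ over all profiles obtainable this way, fix $w^{(1)}$ with $\|w^{(1)}\|\ge\frac12\beta_{1}$, form $W_{k}^{(1)}$ as in the statement and put $v_{k}^{(1)}=u_{k}-u-W_{k}^{(1)}$; since $\chi\equiv1$ near the origin and the distant terms of the sum defining $W_{k}^{(1)}$ vanish locally, $v_{k}^{(1)}\circ e_{y_{k,i}^{(1)}}\rightharpoonup0$ for every $i$. Iterating and diagonalizing over $n$ produces $(y_{k}^{(n)})$, $w^{(n)}$, $\beta_{n}$; and if at some stage the new point sequence stayed, along a subsequence, at bounded distance from an earlier $(y_{k}^{(\ell)})$, a translation of charts would exhibit the new profile as already contained in $M_{\infty}^{(\ell)}$, hence zero, so after relabeling we may assume $d(y_{k}^{(n)},y_{k}^{(m)})\to\infty$ whenever $n\neq m$.

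The energy bookkeeping then forces termination and supplies (\ref{eq:Plancherel}). Expanding $\|u_{k}\|^{2}=\|u+\sum_{n=1}^{\nu}W_{k}^{(n)}+v_{k}^{(\nu)}\|^{2}$, every cross term tends to zero: $(u,W_{k}^{(n)})\to0$ because $W_{k}^{(n)}$ escapes the support of the fixed $u$; $(W_{k}^{(m)},W_{k}^{(n)})\to0$ for $m\neq n$ because $d(y_{k}^{(m)},y_{k}^{(n)})\to\infty$ makes the supports asymptotically disjoint; and $(v_{k}^{(\nu)},W_{k}^{(n)})\to0$ because $v_{k}^{(\nu)}$ vanishes weakly in every chart of $M_{\infty}^{(n)}$; meanwhile $\|W_{k}^{(n)}\|^{2}\to\|w^{(n)}\|_{H^{1,2}(M_{\infty}^{(n)})}^{2}$ by change of variables, convergence of the metric coefficients, and the properties of the covering and of $\chi$ from the Appendix. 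Hence $\|u\|^{2}+\sum_{n=1}^{\nu}\|w^{(n)}\|^{2}+\liminf_{k}\|v_{k}^{(\nu)}\|^{2}\le\limsup_{k}\|u_{k}\|^{2}$, which gives (\ref{eq:Plancherel}) as $\nu\to\infty$ and in particular $\beta_{n}\to0$. With $\beta_{n}\to0$ and the decoupling quantified as in Lemma~\ref{lem:aorth}, the abstract argument of \cite{SoliTi} shows that, on a further renamed subsequence, $S_{k}=\sum_{n}W_{k}^{(n)}$ converges in $H^{1,2}(M)$ unconditionally in $n$ and uniformly in $k$.

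To conclude I would identify $S_{k}$ as the defect of compactness by applying the subcritical vanishing lemma to $u_{k}-u-S_{k}$: for any discretization sequence $(z_{k})$ and index $i$, either $d(z_{k},y_{k}^{(n)})\to\infty$ for all $n$ — then $u\circ e_{z_{k,i}}\rightharpoonup0$, $S_{k}\circ e_{z_{k,i}}\rightharpoonup0$ (each term by decoupling, the sum by uniform convergence), and any weak subsequential limit of $u_{k}\circ e_{z_{k,i}}$ is an admissible profile of norm $\le\beta_{\nu}$ for every $\nu$, hence zero — or $d(z_{k},y_{k}^{(\ell)})$ stays bounded for some $\ell$, in which case, as in Step~8 of the proof of Theorem~\ref{thm:PD-blowupsonly}, $(u_{k}-u-W_{k}^{(\ell)})\circ e_{z_{k,i}}\rightharpoonup0$ while the remaining terms vanish by decoupling and uniform convergence. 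Either way $(u_{k}-u-S_{k})\circ e_{z_{k,i}}\rightharpoonup0$, so $u_{k}-u-S_{k}\to0$ in $L^{p}(M)$, i.e. (\ref{eq:PD}); and (\ref{eq:newBL}) follows from an iterated Brezis--Lieb argument, since the asymptotically disjoint supports of the $W_{k}^{(n)}$ and their escape from the support of $u$ yield $\int_{M}|u+S_{k}|^{p}d\mu\to\int_{M}|u|^{p}d\mu+\sum_{n}\int_{M_{\infty}^{(n)}}|w^{(n)}|^{p}d\mu_{\infty}^{(n)}$, which $u_{k}-u-S_{k}\to0$ in $L^{p}$ upgrades to the statement for $\int_{M}|u_{k}|^{p}d\mu$. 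I expect the genuine obstacle — handled in the Appendix and in \cite{SkrTi4} — to be the construction of $M_{\infty}^{(y_{k})}$ and the verification that the local weak limits $w_{i}$ glue into a single $H^{1,2}$-function $w$ on it; this rests on the convergence of the transition maps afforded by bounded geometry and on organizing the countably many successive diagonal extractions so that every weak convergence needed survives on one common subsequence. Everything else is the familiar cocompactness-to-profile-decomposition machinery together with the (unrescaled, hence simpler) vanishing lemma.
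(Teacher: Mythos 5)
Your sketch is correct and follows essentially the same route as the source: the paper only quotes this theorem from \cite{SkrTi4}, whose proof is precisely the cocompactness-plus-iterative-extraction scheme you outline (with the manifold-at-infinity gluing of the local weak limits $w_i^{(n)}$ as the genuinely new technical ingredient, which you rightly identify and defer to the Appendix). The one piece the paper does prove here --- the extra remainder term in (\ref{eq:Plancherel}), handled in the remark following the statement --- is obtained by exactly the bilinear expansion of $\|u_{k}\|_{H^{1,2}(M)}^{2}$ with vanishing cross terms that you use.
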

\begin{remark}
Note that the original version of (\ref{eq:Plancherel})
\begin{equation}
\|u\|_{H^{1,2}(M)}^{2}+\sum_{n=1}^{\infty}\|w^{(n)}\|_{H^{1,2}(M_{\infty}^{(n)})}^{2}\le\limsup\|u_{k}\|_{H^{1,2}(M)}^{2}\label{eq:oldPln}
\end{equation}
 in \cite{SkrTi4} omits the last term in the left hand side of (\ref{eq:Plancherel}). The inequality in the present
form can be derived, however, from its counterpart in \cite{SkrTi4}
as follows. Let $U_{k}=u+\sum_{n\in\N}W_{k}^{(n)}$. Once we show
that $(u_{k}-U_{k},U_{k})_{H^{1,2}(M)}\to0$, we have (\ref{eq:Plancherel})
by expanding 
\[
0\le\|u_{k}\|{}_{H^{1,2}(M)}^{2}=\|(u_{k}-U_{k})+U_{k}\|{}_{H^{1,2}(M)}^{2}
\]
by bilinearity and applying (\ref{eq:oldPln}) to the sequence $(U_{k})$.
By the uniform convergence of the series in $U_{k}$, it suffices
to show that $(u_{k}-U_{k},u)_{H^{1,2}(M)}\to0$ and that $(u_{k}-U_{k},W_{k}^{(n)})_{H^{1,2}(M)}\to0$.
Since $u_{k}\rightharpoonup u$ and $W_{k}^{(n)}\rightharpoonup0$,
$u_{k}-U_{k}\rightharpoonup0$, the first expression indeed vanishes.
For vanishing of the second expression it suffices to show that 
$(u_k,W_{k}^{(n)})\to\|w^{(n)}\|^2_{H^{1,2}(M_\infty^{(n)})}$ (\cite[relation (5.9)]{SkrTi4}),
$(W_{k}^{(m)},W_{k}^{(n)})_{H^{1,2}(M)}\to 0$ whenever $m\neq n$ (\cite[proof of Lemma~5.5 below (5.19)]{SkrTi4}), and\\
$(W_{k}^{(n)},W_{k}^{(n)})_{H^{1,2}(M)}\to\|w^{(n)}\|^2_{H^{1,2}(M_\infty^{(n)})}$ (\cite[relation (5.14)]{SkrTi4}).
\end{remark}

\subsection{Main result and corollaries}
We will now assume that the parameter $r\in(0,r(M))$ involved in the statements of Sections 2 and 3 satisfies the constraint of Section 4 as well, $r<r(M)/8$. 

\begin{thm}
\label{thm:finalPD}Let $M$ be a Riemannian manifold of bounded geometry
and let $u_{k}$ be a bounded sequence in $H^{1,2}(M)$ weakly convergent
to some $u$.Then there is a renamed subsequence of $u_{k}$,
sequences $(\bar y_k^{(m)})_{k\in\N}$, $m\in\N$, and $(y_k^{*(n)})_{k\in\N}$,  $n\in\N$, of points in $M$, 
sequences $(j_k^{(n)})_{k\in\N}$,  $n\in\N$ of integers,  $j_{k}^{(n)}\to+\infty$ as $k\to\infty$, such that
\begin{equation}
u_{k}-u-\sum_{m\in\N}\bar{W}_{k}^{(m)}-\sum_{n\in\N}W_{k}^{*(n)}\to0\mbox{ in }L^{2^{*}}(M),\label{eq:finalremainder}
\end{equation}
where $\bar{W}_{k}^{(m)}$ are given by Theorem \ref{thm:pd-subcr} (relative to sequences $(\bar y_k^{(m)})_{k\in\N}$),
and 
\begin{equation}
W_{k}^{*(n)}(x)=2^{j_{k}^{(n)}\frac{N-2}{2}}\,\chi\circ e_{y_{k}^{*(n)}}^{-1}(x)\,w^{*(n)}(2^{j_{k}^{(n)}}e_{y_{k}^{*(n)}}^{-1}(x)),\:x\in M,\label{eq:Sksum*}
\end{equation}
where 
\begin{equation}
2^{-j_{k}^{(n)}\frac{N-2}{2}}u_{k}\circ e_{y_{k}^{*(n)}}(2^{-j_{k}^{(n)}}\cdot)\to w^{*(n)}\;\mbox{ a.e. in }\R^{N},\label{eq:profiles01*}
\end{equation}
(as in Theorem~\ref{thm:PD-blowupsonly}), 
and both series in \eqref{eq:finalremainder} converge unconditionally and uniformly in $k$.

Furthermore,
 sequences $j_{k}^{(n)}$,
$y_{k}^{*(n)}$, $j_{k}^{(n')},$ $y_{k}^{*(n')}$ satisfy the
condition (\ref{eq:orth3}).

Moreover, with $M_\infty^{(m)}$, $m\in\N$, as in Theorem~\ref{thm:pd-subcr}, we have

\[\sum_{n\in\N}\int_{\R^{N}}|\nabla w^{*(n)}|^{2}dx+\sum_{m\in\N}\int_{M_{\infty}^{(m)}}(|d\bar{w}^{(m)}|^{2}+|\bar{w}^{(m)}|^{2})d\mu_{\infty}^{(m)} 
\]
\begin{equation}
+ \int_{M}(|du|^{2}+u^{2})d\mu \le\int_{M}(|du_{k}|^{2}+u_{k}^{2})d\mu+o(1),\label{eq:finalPlancherel}
\end{equation}

and

\begin{equation}
\int_{M}|u_{k}|^{2^{*}}d\mu\to\sum_{n\in\N}\int_{\R^{N}}|w^{*(n)}|^{2^{*}}dx+\sum_{m\in\N}\int_{M_{\infty}^{(m)}}|\bar{w}^{(m)}|^{2^{*}}d\mu_{\infty}^{(m)}+\int_{M}|u|^{2^{*}}d\mu.\label{eq:finalBL}
\end{equation}
\end{thm}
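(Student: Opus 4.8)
The plan is to obtain Theorem~\ref{thm:finalPD} by combining the subcritical profile decomposition of Theorem~\ref{thm:pd-subcr} with the critical ``bubbles-only'' profile decomposition of Theorem~\ref{thm:PD-blowupsonly}, in that order. First I would apply Theorem~\ref{thm:pd-subcr} to the sequence $(u_k)$: after passing to a renamed subsequence, we obtain sequences $(\bar y_k^{(m)})_{k\in\N}$ in the discretization $Y_\rho$, profiles $\bar w^{(m)}$ on the manifolds at infinity $M_\infty^{(m)}$, and the bubble-like terms $\bar W_k^{(m)}$, such that the remainder
\[
r_k \eqdef u_k - u - \sum_{m\in\N}\bar W_k^{(m)}
\]
converges to zero in $L^p(M)$ for every $p\in(2,2^*)$, with the series converging unconditionally and uniformly in $k$, and with the Plancherel-type bound \eqref{eq:Plancherel}. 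The sequence $(r_k)$ is bounded in $H^{1,2}(M)$ (by \eqref{eq:Plancherel}) and vanishes in $L^p(M)$ for $p\in(2,2^*)$, so it is exactly the kind of sequence handled by Theorem~\ref{thm:PD-blowupsonly}.

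Next I would apply Theorem~\ref{thm:PD-blowupsonly} to $(r_k)$, obtaining points $y_k^{*(n)}$, integers $j_k^{(n)}\to\infty$, profiles $w^{*(n)}\in\dot H^{1,2}(\R^N)$ satisfying \eqref{eq:profiles01} for $r_k$ in place of $u_k$, the bubbles $W_k^{*(n)}$ of the form \eqref{eq:Sksum*}, the mutual decoupling condition (AO), and the conclusion $r_k - \sum_{n\in\N} W_k^{*(n)}\to 0$ in $L^{2^*}(M)$, which is precisely \eqref{eq:finalremainder}. The first point to check here is that the profiles of $(r_k)$ coincide with the profiles of $(u_k)$ in the sense of \eqref{eq:profiles01*}: since $t_k\eqdef 2^{-j_k^{(n)}}\to 0$, one has $2^{-j_k^{(n)}\frac{N-2}{2}}u\circ e_{y_k^{*(n)}}(t_k\cdot)\to 0$ a.e. for the fixed function $u\in H^{1,2}(M)$ (its $L^{2^*}$-mass disperses under zoom-in, so its gradient has no concentration at scale $t_k\to 0$), and similarly $2^{-j_k^{(n)}\frac{N-2}{2}}\bar W_k^{(m)}\circ e_{y_k^{*(n)}}(t_k\cdot)\to 0$ a.e. because the $\bar W_k^{(m)}$ are built from the subcritical (shift-type, scale-one) profiles and spread out under a zoom-in by a factor $t_k\to 0$ — the supports, measured in the rescaled chart, shrink to a null set. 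One must also verify that the series $\sum_m \bar W_k^{(m)}$ can be differentiated term-by-term inside this limit, which is legitimate by its uniform convergence in $k$. Hence $w^{*(n)}$ is also the a.e. limit of $2^{-j_k^{(n)}\frac{N-2}{2}}u_k\circ e_{y_k^{*(n)}}(t_k\cdot)$, giving \eqref{eq:profiles01*}.

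For the two ``Plancherel'' conclusions I would argue as follows. For \eqref{eq:finalPlancherel}: inequality \eqref{eq:Planch01} applied to $(r_k)$ yields $\sum_n\int_{\R^N}|\nabla w^{*(n)}|^2\,dx \le \liminf\int_M |dr_k|^2\,d\mu \le \liminf\|r_k\|_{H^{1,2}(M)}^2$, while \eqref{eq:Plancherel} gives $\|u\|^2_{H^{1,2}(M)} + \sum_m\|\bar w^{(m)}\|^2_{H^{1,2}(M_\infty^{(m)})} + \limsup\|r_k\|^2_{H^{1,2}(M)} \le \limsup\|u_k\|^2_{H^{1,2}(M)}$; adding these and noting $\liminf\|r_k\|^2 \le \limsup\|r_k\|^2$ produces \eqref{eq:finalPlancherel}. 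For \eqref{eq:finalBL}: since $r_k - S_k^*\to 0$ in $L^{2^*}$ (where $S_k^*=\sum_n W_k^{*(n)}$), Proposition~\ref{prop:BL*} applied to $(r_k)$ gives $\int_M |r_k|^{2^*}d\mu\to\sum_n\int_{\R^N}|w^{*(n)}|^{2^*}dx$; on the other hand $u_k = u + \sum_m\bar W_k^{(m)} + r_k$, and since $r_k\to 0$ in $L^p$ for $p<2^*$ and $r_k$ concentrates only at vanishing scales while $u + \sum_m\bar W_k^{(m)}$ lives at scale one, the three pieces have asymptotically disjoint supports in $L^{2^*}$; combining this with the Brezis–Lieb splitting \eqref{eq:newBL} from Theorem~\ref{thm:pd-subcr} at a fixed $p$ (or, more cleanly, re-running the asymptotic-disjointness argument of Proposition~\ref{prop:BL*}) yields \eqref{eq:finalBL}. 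I expect the main obstacle to be the bookkeeping in this last step — rigorously justifying that the scale-one cluster $u+\sum_m\bar W_k^{(m)}$ and the zoom-in bubbles $W_k^{*(n)}$ decouple in $L^{2^*}$ — since this mixes the two genuinely different mechanisms of concentration (shifts to infinity versus Struwe blow-up) and requires care with the uniform convergence of both series simultaneously; a secondary subtlety is confirming that the remainder from the subcritical decomposition genuinely passes the hypotheses of Theorem~\ref{thm:PD-blowupsonly} on a \emph{single} common subsequence, handled by the usual diagonal extraction.
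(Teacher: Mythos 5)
Your proposal is correct and follows essentially the same route as the paper: apply Theorem~\ref{thm:pd-subcr} first, feed its remainder (bounded in $H^{1,2}$ and vanishing in $L^p$, $p<2^*$) into Theorem~\ref{thm:PD-blowupsonly}, obtain \eqref{eq:finalPlancherel} by inserting \eqref{eq:Planch01} into \eqref{eq:Plancherel}, and obtain \eqref{eq:finalBL} by a Brezis--Lieb/asymptotic-disjointness splitting of $u_k$ into the scale-one cluster and the remainder, evaluating the latter via Proposition~\ref{prop:BL*}. Your explicit check that the profiles $w^{*(n)}$ of the remainder coincide with those of $u_k$ (because $u$ and the scale-one terms $\bar W_k^{(m)}$ vanish under the zoom-in) is a detail the paper leaves implicit, and is a welcome addition.
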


\begin{proof}
Apply Theorem \ref{thm:pd-subcr} to $u_{k}$ and let $v_{k}=u_{k}-u-\sum_{m\in\N}\bar{W}_{k}^{(m)}$
be the left hand side of in (\ref{eq:PD}) Note that $v_{k}$ is a
bounded sequence in $H^{1,2}(M)$ because so are both $u_{k}$ and
$\sum_{m\in\N}\bar{W}_{k}^{(m)}$ (for the latter it can be inferred
from (\ref{eq:Plancherel})). Apply Theorem \ref{thm:PD-blowupsonly}
to $v_{k}.$ Then (\ref{eq:finalremainder}) is immediate from combining
(\ref{eq:PD}) and (\ref{eq:remainder}). Relation (\ref{eq:finalPlancherel})
follows from substitution of (\ref{eq:Planch01}) for $v_{k}$ into
(\ref{eq:Plancherel}).

By Brezis-Lieb Lemma, we have
\begin{equation}
\int_{M}|u_{k}|^{2^{*}}d\mu=\int_{M}|u-u-\sum_{m\in\N}\bar{W}_{k}^{(m)}|^{2^{*}}d\mu+\int_{M}|u+\sum_{m\in\N}\bar{W}_{k}^{(m)}|^{2^{*}}+o(1).\label{eq:tempBL}
\end{equation}
Then (\ref{eq:finalBL}) follows by evaluating the first
term in the right hand side of (\ref{eq:tempBL}) by Proposition \ref{prop:BL*}
and evaluating the second term as $\int_{M}|u|^{2^{*}}d\mu+\sum_{m=1}^{\infty}\int_{M_{\infty}^{(m)}}|\bar{w}^{(m)}|^{2^{*}}d\mu_{\infty}^{(n)}$.
The latter has the same form as (\ref{eq:newBL}), but for $p=2^{*}$, 
and can be obtained by literally repeating the argument in \cite{SkrTi4}
applied to the sequence $(u+\sum_{m\in\N}\bar{W}_{k}^{(m)})_{k}$. 
\end{proof}
\begin{cor}
Let $M$ be a manifold of negative curvature with bounded geometry (in particular a hyperbolic space). Let $u_{k}\in H^{1,2}(M)$
satisfy $\int_{M}|du_{k}|^{2}d\mu\le C. $Then $u_{k}$ has a renamed
subsequence satisfying the assertions of Theorem \ref{thm:finalPD}.
\end{cor}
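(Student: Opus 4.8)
The plan is to reduce the corollary directly to Theorem~\ref{thm:finalPD} by observing that, under the curvature hypothesis, a bound on the Dirichlet (gradient) energy alone already forces a bound on the full $H^{1,2}(M)$-norm, so that the hypotheses of Theorem~\ref{thm:finalPD} are satisfied along a subsequence. Here, as in the discussion preceding the corollary, ``negative curvature'' is read as a \emph{uniform} negative upper bound on the sectional curvature of $M$, say $K_{M}\le-\kappa$ with $\kappa>0$; for the hyperbolic space one may take $\kappa=1$.

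The first step is to quote the spectral gap estimate for such manifolds. By McKean's inequality (see \cite{chavel-eigen}), the bottom of the $L^{2}$-spectrum of the Laplace--Beltrami operator on $M$ is strictly positive, $\lambda_{1}(M)>0$, which is equivalent to the Poincar\'e-type inequality
\[
\lambda_{1}(M)\int_{M}u^{2}\,d\mu\le\int_{M}|du|^{2}\,d\mu,\qquad u\in H^{1,2}(M).
\]
Inserting the hypothesis $\int_{M}|du_{k}|^{2}\,d\mu\le C$ into this inequality gives $\int_{M}u_{k}^{2}\,d\mu\le C/\lambda_{1}(M)$, and hence $\|u_{k}\|_{H^{1,2}(M)}^{2}=\int_{M}(|du_{k}|^{2}+u_{k}^{2})\,d\mu\le C\bigl(1+\lambda_{1}(M)^{-1}\bigr)$. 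Thus $(u_{k})$ is bounded in the Hilbert space $H^{1,2}(M)$.

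Since $H^{1,2}(M)$ is reflexive, $(u_{k})$ then has a subsequence converging weakly in $H^{1,2}(M)$ to some $u$, and Theorem~\ref{thm:finalPD}, applied to this subsequence, yields all the assertions of that theorem. (One may note in passing that on such $M$ the norms of $H^{1,2}(M)$ and $\dot{H}^{1,2}(M)$ are equivalent, so the decomposition obtained is exactly what one would get working with the gradient norm alone, i.e.\ the precise manifold counterpart of Solimini's decomposition, with no ``deflation'' terms possible.) I do not expect any real obstacle here beyond invoking the spectral gap; the only point that calls for care is the interpretation of the curvature assumption, since without a uniform negative upper bound on the sectional curvature $\lambda_{1}(M)$ may degenerate to $0$ and the reduction fails.
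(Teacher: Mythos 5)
Your proposal is correct and is exactly the argument the paper intends (no separate proof is given there; the corollary rests on the observation, made in the introduction with reference to \cite{chavel-eigen}, that on a manifold whose curvature is negative and bounded away from zero the gradient seminorm dominates the $L^{2}$-norm, so the gradient bound yields boundedness in $H^{1,2}(M)$, a weakly convergent subsequence exists by reflexivity, and Theorem~\ref{thm:finalPD} applies). Your caveat about reading ``negative curvature'' as a uniform negative upper bound is well taken and consistent with the paper's own phrasing; strictly speaking McKean's inequality also requires $M$ to be simply connected, a hypothesis the corollary's statement leaves implicit.
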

\begin{remark}
Let $M$ be a non-compact symmetric space ( in particular, $\R^N$ or the hyperbolic space $\mathbb H^N$ with $N>2$). Then Theorem~\ref{thm:finalPD} holds with  $M_\infty^{(m)}=M$ for every $m\in\N$, and with $\bar W_k^{(m)}=\bar w^{(m)}\circ\eta_k^{(m)}$, where $w^{(m)}=\wlim u_k\circ{\eta_k^{(m)}}^{-1}$, $\eta_k^{(m)}$ are discrete sequences of isometries on $M$, and the sequences ${\eta_k^{(m)}}^{-1}\circ {\eta_k^{(m')}}$ are discrete whenever $m\neq m'$. This is immediate from the corresponding simplification of Theorem~\ref{thm:pd-subcr} for the case homogeneous spaces, \cite[Theorem~1.1]{SkrTi4}. The same holds also when $M$ coincides with homogeneous space outside of a compact subset.    
\end{remark}


\section{Appendix}

\subsection{Manifolds of bounded geometry and covering lemma}

In this subsection we list some elementary properties of manifolds
of bounded geometry. 

The following lemma is immediate from \cite[Theorem A ff.]{Eich}
.
\begin{lem}
\label{lem:2.1}Let $p\in(0,\infty)$ and $a\in(0,r(M))$. There exists
a constant $C>1$ such that for any $x\in M$\textup{ and any $u\in H^{1,2}(M)$,}

\begin{equation}
C^{-1}\int_{B_{a}(x)}|u|^{p}d\mu \ \le \ \int_{\Omega_{a}}|u\circ e_{x}\text{\ensuremath{|^{p}}dx \ \ensuremath{\le} \ C\ensuremath{\int}}_{B_{a}(x)}|u|^{p}d\mu,\label{eq:peq}
\end{equation}
and 

\textup{
\[
C^{-1}\int_{B_{a}(x)}|du|^{2}d\mu \ \le \ \int_{\Omega_{a}}\text{|\ensuremath{\nabla}(u\ensuremath{\circ e_{x}})\ensuremath{|^{2}}dx \ \ensuremath{\le} \ C\ensuremath{\int}}_{B_{a}(x)}|du|^{2}d\mu
\]
}
\[
\]
\end{lem}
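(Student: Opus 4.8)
The plan is to reduce both inequalities to uniform two-sided bounds on the metric coefficients expressed in the normal coordinates furnished by $e_x$, and then to invoke the comparison estimates guaranteed by bounded geometry, which is precisely the content of the cited result of Eichhorn.

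First I would record the change of variables. Pulling the Riemannian measure back through $e_x$ gives
\[
\int_{B_a(x)}|u|^p\,d\mu=\int_{\Omega_a}|u\circ e_x(\xi)|^p\,\sqrt{g(e_x(\xi))}\,d\xi,
\]
where $\sqrt{g}$ is the volume density in the normal chart. Hence the first inequality in \eqref{eq:peq} is immediate once one has a bound
\[
C^{-1}\le\sqrt{g(e_x(\xi))}\le C\qquad(\xi\in\Omega_a)
\]
with $C$ independent of $x$. For the gradient inequality I would use that in the same chart
\[
\int_{B_a(x)}|du|^2\,d\mu=\int_{\Omega_a}g^{\alpha\beta}(e_x(\xi))\,\partial_\alpha(u\circ e_x)\,\partial_\beta(u\circ e_x)\,\sqrt{g(e_x(\xi))}\,d\xi,
\]
while the Euclidean right-hand side is $\int_{\Omega_a}\sum_\alpha(\partial_\alpha(u\circ e_x))^2\,d\xi$. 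These two quantities are equivalent, uniformly in $x$, as soon as the symmetric matrix $(g^{\alpha\beta})$ is uniformly elliptic on $\Omega_a$ and $\sqrt{g}$ is bounded away from $0$ and $\infty$, since then the quadratic form $g^{\alpha\beta}\sqrt{g}$ is comparable to the identity with constants independent of the base point.

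The substantive step is therefore the uniform-in-$x$ control of the metric in normal coordinates, namely
\[
\lambda\,\delta_{\alpha\beta}\le g_{\alpha\beta}(e_x(\xi))\le\Lambda\,\delta_{\alpha\beta}\qquad(\xi\in\Omega_a)
\]
as quadratic forms, with $0<\lambda\le\Lambda$ independent of $x$; the bounds on $\det$ and on $(g^{\alpha\beta})$ then follow. This is exactly where bounded geometry enters. Positivity of the injectivity radius $r(M)$ ensures that $e_x$ is a diffeomorphism of $\Omega_a$ onto $B_a(x)$ for every $x$, since $a<r(M)$, so normal coordinates of a fixed radius are available at every point; the uniform bound on the curvature tensor $R^M$ controls, via Jacobi-field (Rauch) comparison, the deviation of $g_{\alpha\beta}$ from $\delta_{\alpha\beta}$ along radial geodesics, and because $g_{\alpha\beta}(0)=\delta_{\alpha\beta}$ at the center, integrating these comparison estimates over the fixed radius $a$ yields $\lambda,\Lambda$ depending only on the curvature bound and on $a$, and not on $x$. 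This is the statement extracted from \cite[Theorem A ff.]{Eich}.

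The main obstacle, and really the only nonroutine point, is this base-point-independent control of the metric coefficients; once it is in hand, both displayed inequalities drop out of the change-of-variables formula together with uniform ellipticity. I would note that only the $k=0$ curvature bound together with the injectivity-radius hypothesis of Definition~\ref{def:bg} are needed for the $C^0$ estimates used here (the higher-derivative bounds being required elsewhere in the paper), and that the constant $C$ may be taken uniform in $x$ precisely because the comparison bounds depend only on $\sup|R^M|$ and on $a$.
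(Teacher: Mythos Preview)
Your proposal is correct and matches the paper's approach exactly: the paper gives no proof at all, merely stating that the lemma ``is immediate from \cite[Theorem A ff.]{Eich},'' and what you have written is precisely the unpacking of that word ``immediate'' --- the change-of-variables reduction to uniform two-sided bounds on $g_{\alpha\beta}$ in normal coordinates, which is the content of Eichhorn's Theorem~A for manifolds of bounded geometry.
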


For any two points $x,y\in M$ define 
\[
\Omega_{a}(x,y)=e_{x}^{-1}(e_{x}\Omega_{a}\cap e_{y}\Omega_{a})\subset\Omega_{a},
\]
and a diffeomorphism 
\[
\psi_{yx}=e_{y}^{-1}\circ e_{x}:\Omega_{a}(x,y)\to\Omega_{a}(y,x).
\]
\begin{lem}
\label{lem:bdd-derivatives}If the manifold $M$ has bounded geometry, 
then for any $\alpha\in\N_{0}^{N}$ there exists a constant $C_{\alpha}>0$,
such that 
\[
|D^{\alpha}\psi_{yx}(\xi)|\le C_{\alpha}\mbox{ whenever }x,y\in M,\text{\;\ensuremath{\Omega_{a}}(x,y)\ensuremath{\neq\emptyset},}\;\xi\in\Omega_{a}(x,y).
\]

whenever $\Omega_{a}(x,y)$ is nonempty.
\end{lem}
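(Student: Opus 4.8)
The plan is to exploit the intrinsic characterization of $\psi_{yx}$ as a Riemannian isometry together with the standard fact that bounded geometry yields uniform control of the metric in normal coordinates. Write $g^{(x)}=e_x^*g$ for the pullback metric on $\Omega_a$, so that by construction each $e_x$ is an isometry of $(\Omega_a,g^{(x)})$ onto the geodesic ball $B(x,a)\subset M$; consequently $\psi_{yx}=e_y^{-1}\circ e_x$ is an isometry of $(\Omega_a(x,y),g^{(x)})$ onto $(\Omega_a(y,x),g^{(y)})$. The first input I would record is that, for a manifold of bounded geometry, there are constants \emph{uniform in} $x\in M$ controlling the metric in normal coordinates: a uniform ellipticity bound $\lambda I\le (g^{(x)}_{ij}(\xi))\le\Lambda I$ and uniform bounds $|\partial^\alpha g^{(x)}_{ij}(\xi)|\le C_\alpha$ for every multi-index $\alpha$ and every $\xi\in\Omega_a$. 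These are the coordinate form of Definition \ref{def:bg}; they follow from the curvature bounds via Jacobi-field estimates, in the same circle of ideas underlying Lemma \ref{lem:2.1}, and may be quoted from \cite{Shubin}. In particular the Christoffel symbols ${}^{(x)}\Gamma^k_{ij}$ of $g^{(x)}$ and all their derivatives are bounded uniformly in $x$.

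Second, I would use that an isometry is affine, i.e.\ it preserves the Levi-Civita connection, which amounts to the vanishing of the Hessian $\nabla d\psi_{yx}=0$. In the coordinates above this is the second-order identity
\[
\partial_i\partial_j\psi_{yx}^k \;=\; {}^{(x)}\Gamma^p_{ij}(\xi)\,\partial_p\psi_{yx}^k \;-\; {}^{(y)}\Gamma^k_{lm}(\psi_{yx}(\xi))\,\partial_i\psi_{yx}^l\,\partial_j\psi_{yx}^m .
\]
This is the key structural equation: it expresses all second derivatives of the transition map through the map itself, its first derivatives, and the uniformly bounded Christoffel symbols of the two charts.

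Third, I would run an induction on $|\alpha|$, assuming the asserted bound for all orders strictly less than $|\alpha|$. The $C^0$ bound is immediate, $|\psi_{yx}(\xi)|\le a$, since the image lies in $\Omega_a$. For the $C^1$ bound the isometry relation reads in matrix form $(D\psi_{yx})^{\!\top}G^{(y)}(\psi_{yx})(D\psi_{yx})=G^{(x)}$, where $G^{(x)}=(g^{(x)}_{ij})$; combining $G^{(x)}\le\Lambda I$ with $G^{(y)}\ge\lambda I$ yields $(D\psi_{yx})^{\!\top}(D\psi_{yx})\le(\Lambda/\lambda) I$, hence $|D\psi_{yx}|\le\sqrt{\Lambda/\lambda}$ uniformly in $x,y,\xi$. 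For $|\alpha|\ge2$ I would write $D^\alpha\psi_{yx}=\partial^\beta(\partial_i\partial_j\psi_{yx})$ with $|\beta|=|\alpha|-2$ and apply $\partial^\beta$ to the displayed identity: by the Leibniz and chain rules every resulting term is a product of a uniformly bounded derivative of a Christoffel symbol with derivatives of $\psi_{yx}$ of order at most $|\alpha|-1$. By the induction hypothesis all of these are bounded uniformly in $x,y$, giving $|D^\alpha\psi_{yx}(\xi)|\le C_\alpha$ with $C_\alpha$ depending only on $\alpha$ and the geometry.

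The one genuinely nontrivial ingredient is the uniform normal-coordinate control of the metric asserted in the first step; everything after it is a bounded-coefficient algebraic bootstrap. I would therefore present that step as a citation to the standard theory of bounded geometry and devote care only to checking that the constants there are genuinely independent of the base point, which is precisely the content of the two conditions in Definition \ref{def:bg}. The same uniform metric bounds are what render the Christoffel factors in the bootstrap independent of $x$ and $y$, and hence what upgrade the (pointwise in $(x,y)$) smoothness of $\psi_{yx}$ to the uniform estimates claimed.
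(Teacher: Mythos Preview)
Your argument is correct. The paper does not actually prove this lemma: it simply records that the statement ``is immediate from Definition~\ref{def:bg}'' and moves on. What you have written is a standard and valid way to unpack that assertion---the uniform $C^\infty$-control of $g^{(x)}_{ij}$ in normal coordinates is precisely the well-known coordinate reformulation of bounded geometry (cf.\ \cite{Shubin,Eich}), and from there the isometry identity $(D\psi_{yx})^{\top}G^{(y)}(\psi_{yx})\,D\psi_{yx}=G^{(x)}$ plus the affine equation $\nabla d\psi_{yx}=0$ give exactly the inductive bootstrap you describe. So your proof is more detailed than, rather than different from, what the paper provides.
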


This lemma is immediate from Definition \ref{def:bg}.

The following lemma is found, in particular, in \cite[ Lemma 1.1]{Hebey}.
\begin{lem}
\label{lem:covering}Let $M$ be an $N$-dimensional connected Riemannian
manifold with Ricci curvature uniformly bounded from below . Let $\rho>0$. There exists an at
most countable set $Y\in M$ such that 

\begin{equation}
d(y,y')\ge\rho/2\mbox{ whenever }y\neq y',y,y'\in Y,\label{eq:discrete}
\end{equation}
 the collection of balls $\lbrace B(y,\rho)\rbrace_{y\in Y}$ is a
covering of $M$, and for any $a\ge\rho$ the multiplicity of the
covering $\lbrace B(y,a)\rbrace_{y\in Y}$ is uniformly finite.
\end{lem}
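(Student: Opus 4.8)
The plan is to take $Y$ to be a maximal $\rho/2$-separated subset of $M$ and then verify the three asserted properties in turn. First I would apply Zorn's lemma to the family of subsets of $M$ whose points are pairwise at distance at least $\rho/2$, ordered by inclusion: the union of a chain of such sets is again $\rho/2$-separated, so a maximal element $Y$ exists, and \eqref{eq:discrete} holds by construction. Maximality then forces the covering property, for if some $x\in M$ satisfied $d(x,y)\ge\rho/2$ for all $y\in Y$, then $Y\cup\{x\}$ would be a strictly larger separated set, a contradiction; hence every $x$ lies within distance $\rho/2<\rho$ of some $y\in Y$, so $\{B(y,\rho)\}_{y\in Y}$ covers $M$.

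For countability I would use that a connected Riemannian manifold is $\sigma$-compact. On any compact subset $K\subset M$ the points of $Y\cap K$ remain $\rho/2$-separated, and a compact (hence totally bounded) metric space contains only finitely many pairwise $\rho/2$-separated points; writing $M$ as a countable union of compact sets then shows that $Y$ is at most countable.

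The main step, and the only place where the Ricci lower bound is needed, is the uniform finiteness of the multiplicity, which I would obtain from Bishop--Gromov volume comparison. Fix $a\ge\rho$ and $x\in M$, and let $y_{1},\dots,y_{m}$ be the points of $Y$ lying in $B(x,a)$. The balls $B(y_{i},\rho/4)$ are pairwise disjoint by \eqref{eq:discrete} and are all contained in $B(x,a+\rho/4)$, so $\sum_{i=1}^{m}\mathrm{vol}\,B(y_{i},\rho/4)\le\mathrm{vol}\,B(x,a+\rho/4)$. On the other hand $d(x,y_{i})<a$ gives $B(x,a+\rho/4)\subset B(y_{i},2a+\rho/4)$, and Bishop--Gromov comparison with the constant-curvature model space of the given Ricci lower bound produces a constant $\Lambda=\Lambda(N,a,\rho)$, independent of $x$ and $i$, with $\mathrm{vol}\,B(y_{i},2a+\rho/4)\le\Lambda\,\mathrm{vol}\,B(y_{i},\rho/4)$. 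Combining these two estimates yields $\mathrm{vol}\,B(x,a+\rho/4)\le\Lambda\,\mathrm{vol}\,B(y_{i},\rho/4)$ for each $i$, whence $m\,\mathrm{vol}\,B(x,a+\rho/4)\le\Lambda\sum_{i}\mathrm{vol}\,B(y_{i},\rho/4)\le\Lambda\,\mathrm{vol}\,B(x,a+\rho/4)$, so $m\le\Lambda$. Since $x\in B(y,a)$ exactly when $y\in B(x,a)$, this bounds the multiplicity of $\{B(y,a)\}_{y\in Y}$ by $\Lambda$ uniformly in $x$.

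The delicate point I would be careful about is that the comparison constant $\Lambda$ must depend only on $N$, the Ricci lower bound, and the radii $a$ and $\rho$, and not on the base point $y_{i}$; this is precisely what the monotonicity of $R\mapsto\mathrm{vol}\,B(p,R)/V_{\kappa}(R)$ in Bishop--Gromov guarantees, giving $\Lambda=V_{\kappa}(2a+\rho/4)/V_{\kappa}(\rho/4)$ with $V_{\kappa}$ the model volume function. Everything else is a routine packing argument, and the passage from a covering by balls of radius $\rho/2$ to one by balls of radius $\rho$ (or any $a\ge\rho$) is immediate.
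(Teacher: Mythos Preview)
Your argument is correct and is essentially the standard proof; in fact the paper does not give its own proof of this lemma but simply cites \cite[Lemma~1.1]{Hebey}, where the same maximal-separated-set construction together with Bishop--Gromov volume comparison is used. There is nothing to add.
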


\subsection{Manifolds at infinity}

In this subsection we give a cursive summary of the construction of
manifolds at infinity for a Riemannian manifold of bounded geometry,
following \cite{SkrTi4}. We consider a radius $a\in(1,r(M))$
and the discretization $Y=Y_{a}$ of $M$ fixed assured by the previous lemma with $\rho=a$. We will use the notation
$\N_{0}\eqdef\N\cup\lbrace0\rbrace$.
\begin{defn}
\label{def:trailing} Let $(y_{k})_{k\in\N}$ be a sequence in $Y$.
A countable family $\lbrace(y_{k;i})_{k\in\N}\rbrace_{i\in\N_{0}}$
of sequences on $Y$ is called a family of trailing sequences for
$(y_{k})_{k\in\N}$ if for every $k\in\N$ $(y_{k;i})_{i\in\N_{0}}$
is an ordering of $Y$ by the distance from $y_{k}$, that is, an
enumeration of $Y$ such that $d(y_{k,i},y_{k})\le d(y_{k,i+1},y_{k})$
for all $i\in\N_{0}$. In particular, $y_{k,0}=y_{k}$. 
\end{defn}

The trailing family is generally not uniquely defined when for some
$k\in\N$ there are several points of $Y$ with the same distance
from $y_{k}$, so strictly speaking the manifold at infinity is determined
in the construction below not by the sequence $(y_{k})$ but by its
trailing family.

To each pair $(i,j)\in\N_{0}^{2}$ we associate a subset $\Omega_{ij}$
of $\Omega_{\rho}$, where $\rho=\frac{a}{5}$. Note that for each
$i,j\in\N_{0}$, such that $\liminf_{k\to\infty}d(y_{k,i},y_{k,j})\le\rho$,
and with $\xi,\eta\in\Omega_{\rho}$, 
\[
d(e_{y_{k,j}}\xi,e_{y_{k,i}}\eta)\le d(e_{y_{k,j}}\xi,y_{k,j})+d(y_{k,j},y_{k,i})+d(y_{k,i},e_{y_{k,i}}\eta)\le3\rho+o(1)<a
\]
 for all $k$ large enough, so that we have a sequence of diffeomorphisms

\[
\psi_{ij,k}\eqdef e_{y_{k,i}}^{-1}\circ e_{y_{k,j}}:\;\bar{\Omega}_{\rho}\to\Omega_{a},\,k\mbox{ large enough}.
\]
 %

By an argument combining uniform bounds on derivatives of exponential
maps on a manifold of bounded geometry, Arzela-Ascoli theorem, and
diagonalization, one may arrive at a renamed subsequence such that
for all $i,j\in\N_{0},$ $(\psi_{ij,k})_{k\in\N}$ converges in $C^{\infty}(\bar{\Omega}_{\rho})$
to some smooth function $\psi_{ij}:\bar{\Omega}_{\rho}\to\Omega_{a}$.
Setting $\Omega_{ij}\eqdef\psi_{ij}\Omega_{\rho}\cap\Omega_{\text{\ensuremath{\rho}}}$,
we have $\psi_{ij}\circ\psi_{ji}=\mathrm{id}$ on $\Omega_{ij}$ and
$\psi_{ji}\circ\psi_{ij}=\mathrm{id}$ on $\Omega_{ji}$. Therefore
$\psi_{ji}=\psi_{ij}^{-1}$ in restriction to $\Omega_{ij}$, and
$\psi_{ji}$ is a diffeomorphism between $\Omega_{ij}$ and $\Omega_{ji}$.
Note that this construction gives that $\psi_{ii}=\mathrm{id}$ ,
$\Omega_{ii}=\Omega_{\rho}$ for all $i\in\N_{0}$. Furthermore,
\[
\psi_{\ell i}=\lim_{k\to\infty}e_{y_{k,\ell}}^{-1}\circ e_{y_{k,i}}=\lim_{k\to\infty}e_{y_{k,\ell}}^{-1}\circ e_{y_{k,j}}\circ e_{y_{k,j}}^{-1}\circ e_{y_{k,i}}\]
\[=\lim_{k\to\infty}e_{y_{k,\ell}}^{-1}\circ e_{y_{k,j}}\circ\lim_{k\to\infty}e_{y_{k,j}}^{-1}\circ e_{y_{k,ji}}=\psi_{\ell j}\circ\psi_{ji},
\]
and
\[
\psi_{ij}(\Omega_{ji}\cap\Omega_{jk})=\psi_{ij}(\psi_{ji}(\Omega_{\rho})\cap\Omega_{\text{\ensuremath{\rho}}}\cap\psi_{jk}(\Omega_{\rho})\cap\Omega_{\text{\ensuremath{\rho}}})=\Omega_{ij}\cap\Omega_{ik},
\]
This allows to invoke a gluing theorem (e.g. \cite[Theorem 3.1]{Gallier3})
to conclude that exists a smooth differential manifold $M_{\infty}$
with an atlas $\lbrace\Omega_{\rho},\varphi_{i}\rbrace_{i\in\N_{0}}$
whose transition maps satisfy $\varphi_{j}^{-1}\varphi_{i}=\psi_{ij}:\Omega_{ji}\to\Omega_{ij}$. 

The manifold $M_{\infty}$ can be endowed with the following Riemannian
metric. The metric on $M$ is a bilinear form on $TM$, expressed
in the atlas $\lbrace\Omega_{\rho},e_{y}\rbrace_{y\in Y}$ as follows.
For each $y\in Y$ we have an orthonormal frame $\lbrace\nu_{\alpha}^{y}\rbrace_{\alpha=1,\dots,N}$
on $Te_{y}(\Omega_{\rho})\subset TM$ by using fixed Euclidean coordinates
for $\Omega_{\rho}$, and setting a local frame $\nu_{\alpha}^{y}\eqdef e_{y*}\partial_{\alpha}$,
$\alpha=1,\dots,N$. Then the metric tensor on the chart $(\Omega_{\rho},e_{y})$
is given by $g[\nu_{\alpha,}^{y}\nu_{\beta}^{y}]\circ e_{y}$. We
define a local frame on the chart$(\Omega_{\rho},\varphi_{i})$ of
$M_{\infty}$, $i\in\N_{0}$, by $\nu_{\alpha}^{(i)}\eqdef\varphi_{i*}\partial_{\alpha}$.
Then we set 
\begin{equation}
g[\nu_{\alpha}^{(i)},\nu_{\beta}^{(i)}]\circ\varphi_{i}\eqdef\lim_{k\to\infty}g[\nu_{\alpha,}^{y_{k,i}}\nu_{\beta}^{y_{k,i}}]\circ e_{y_{k,i}}\mbox{ on }\Omega_{\rho},\,i\in\N_{0}.\label{eq:metric}
\end{equation}
It can be shown that this defines a Riemannian metric on $M_{\infty}$
by verifying a compatibility relation for overlapping charts 
\begin{equation}
g_{\alpha\beta,i}=\sum_{\gamma,\delta=1}^{N}\partial_{\alpha}\psi_{ji}^{\gamma}\,\partial_{\beta}\psi_{ji}^{\delta}\;g_{\gamma\delta,j}\circ\psi_{ji}\mbox{ on }\Omega_{ij},i,j\in\N_{0},\label{eq:gluing-R}
\end{equation}
where $\psi_{ji}^{\alpha}$ are components of $\psi_{ji}$, i.e. $\psi_{ji}=\sum_{\alpha=1}^{N}\psi_{ji}^{\alpha}\nu_{\alpha}^{(i)}$
and $g_{\alpha\beta,i}\eqdef g[\nu_{\alpha}^{(i)},\nu_{\beta}^{(i)}]\circ\varphi_{i}$.
\begin{defn}
A manifold at infinity $M_{\infty}^{(y_{k})}$ of a manifold $M$
with bounded geometry, generated by a sequence $(y_{k})$ in $Y$,
is a differentiable manifold given by the construction above supplied
with a Riemannian atlas $\lbrace\varphi_{i},\Omega_{\rho},[g_{\alpha\beta,i}]\rbrace_{i\in\N_{0}}$,
whose transition maps are $\varphi_{j}^{-1}\varphi_{i}=\psi_{ji}:\Omega_{ij}\to\Omega_{ji}$,
where $\psi_{ji}=\lim_{k\to\infty}e_{y_{k,j}}^{-1}\circ e_{y_{k,i}}$
(as a sequence of maps $\Omega_{\rho}\to\Omega_{a}$) and $\Omega_{ij}=\psi_{ij}\Omega_{\rho}\cap\Omega_{\text{\ensuremath{\rho}}}$,
$i,j\in\N_{0}$; and the metric $\lbrace[g_{\alpha\beta,i}]\rbrace_{i\in\N_{0}}$
is given by (\ref{eq:metric}). 
\end{defn}

In terms of the definition above the argument of this subsection proves
the following statement. 
\begin{prop}
Let $M$ be a smooth manifold with bounded geometry. Then every sequence
$(y_{k})$ in $Y$ has a renamed subsequence that generates a Riemannian
manifold at infinity $M_{\infty}^{(y_{k})}$ of the manifold $M$.
\end{prop}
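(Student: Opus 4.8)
The plan is to verify that the construction described in the prose of this subsection can be executed after a single passage to a subsequence of $(y_k)$, and that the object it produces is a genuine Riemannian manifold. I would organize the argument into four steps: (a) extract convergent transition maps and convergent metric coefficients; (b) pass to the limit in the cocycle relations, so that the charts are compatible; (c) invoke the gluing theorem to obtain a smooth manifold $M_\infty$; and (d) transport the limiting metric coefficients to $M_\infty$ and check their compatibility on overlaps.

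For step (a), fix a family of trailing sequences $\{(y_{k,i})_{k\in\N}\}_{i\in\N_0}$ for $(y_k)$ as in Definition~\ref{def:trailing}, and first pass to a subsequence along which $d(y_{k,i},y_{k,j})$ converges in $[0,\infty]$ for every pair $(i,j)\in\N_0^2$. For the countably many pairs for which this limit is $\le\rho$, the distance estimate recorded above shows that $\psi_{ij,k}\eqdef e_{y_{k,i}}^{-1}\circ e_{y_{k,j}}$ is defined on $\bar\Omega_\rho$ for all large $k$, and by Lemma~\ref{lem:bdd-derivatives} every $D^\alpha\psi_{ij,k}$ is bounded on $\bar\Omega_\rho$ uniformly in $k$; the remaining pairs will contribute $\Omega_{ij}=\emptyset$ and play no role. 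A diagonal Arzela-Ascoli argument over orders of differentiation and over this countable set of pairs yields a renamed subsequence of $(y_k)$ along which $\psi_{ij,k}\to\psi_{ij}$ in $C^\infty(\bar\Omega_\rho)$ for every relevant pair. In the same extraction I would pass further so that the metric coefficients $g[\nu^{y_{k,i}}_\alpha,\nu^{y_{k,i}}_\beta]\circ e_{y_{k,i}}$ converge in $C^\infty(\bar\Omega_\rho)$ to limits $g_{\alpha\beta,i}$; bounded geometry supplies the uniform-in-$(k,i)$ $C^\infty$ bounds (boundedness of $\nabla^m R^M$ for every $m$) and a uniform ellipticity bound on $\Omega_\rho$ (positivity of the injectivity radius), so $(g_{\alpha\beta,i})$ is again symmetric and positive definite.

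For step (b), the finite-$k$ identities $\psi_{ii,k}=\mathrm{id}$, $\psi_{ij,k}\circ\psi_{ji,k}=\mathrm{id}$ and $\psi_{\ell i,k}=\psi_{\ell j,k}\circ\psi_{ji,k}$ hold wherever both sides are defined; because the convergence is in $C^\infty$ on the compact $\bar\Omega_\rho$ they pass to the limit on the appropriate open subsets, so that, with $\Omega_{ij}\eqdef\psi_{ij}\Omega_\rho\cap\Omega_\rho$, one obtains $\psi_{ii}=\mathrm{id}$, $\Omega_{ii}=\Omega_\rho$, the fact that $\psi_{ji}=\psi_{ij}^{-1}$ is a diffeomorphism $\Omega_{ij}\to\Omega_{ji}$, the composition law $\psi_{\ell i}=\psi_{\ell j}\circ\psi_{ji}$, and the triple-overlap identity $\psi_{ij}(\Omega_{ji}\cap\Omega_{jk})=\Omega_{ij}\cap\Omega_{ik}$. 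This is exactly the data required to invoke the gluing theorem \cite[Theorem~3.1]{Gallier3} (step (c)), producing a smooth manifold $M_\infty$ with atlas $\{\varphi_i,\Omega_\rho\}_{i\in\N_0}$ whose transition maps are $\varphi_j^{-1}\varphi_i=\psi_{ij}:\Omega_{ji}\to\Omega_{ij}$. For step (d), declare the metric on $(\Omega_\rho,\varphi_i)$ to have coefficients $g_{\alpha\beta,i}$ in the frame $\nu_\alpha^{(i)}=\varphi_{i*}\partial_\alpha$, i.e. relation (\ref{eq:metric}); the compatibility relation (\ref{eq:gluing-R}) on $\Omega_{ij}$ then follows by letting $k\to\infty$ in the tensorial change-of-coordinates formula for the metric of $M$ under the diffeomorphism $\psi_{ji,k}$, using the joint $C^\infty$ convergence of $\psi_{ji,k}$ and of the metric coefficients from step (a). Hence $M_\infty$ carries a well-defined Riemannian metric and is, by definition, the manifold at infinity $M_\infty^{(y_k)}$.

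The principal difficulty is not any single estimate but making the simultaneous extraction legitimate: countably many families of maps and of metric coefficients have to be forced to converge along one common subsequence, and for this the bounds on $D^\alpha\psi_{ij,k}$ and on the $C^\infty$ norms and ellipticity constants of the metric coefficients must be uniform in $k$ \emph{and} in the chart index $i$. This uniformity is precisely what Definition~\ref{def:bg} of bounded geometry supplies: positivity of the injectivity radius keeps every exponential chart $e_{y_{k,i}}$ non-degenerate on the fixed ball $\Omega_a\supset\bar\Omega_\rho$, and boundedness of all covariant derivatives $\nabla^m R^M$ gives the uniform higher-order control. A secondary point of care is that passing to a limit can only shrink overlap domains, so the cocycle identities must be asserted on the correct open sets $\Omega_{ij}$ and each $\psi_{ij}$ must be verified to be an actual diffeomorphism $\Omega_{ji}\to\Omega_{ij}$ (from $\psi_{ji}\circ\psi_{ij}=\mathrm{id}$ together with openness of $\Omega_{ij}$, inherited from finite $k$ via uniform convergence on $\bar\Omega_\rho$), not merely a smooth map.
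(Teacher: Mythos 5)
Your proposal is correct and follows essentially the same route as the paper's own argument, which is precisely the construction carried out in this subsection: a diagonal Arzela--Ascoli extraction of the transition maps $\psi_{ij,k}$ (and of the metric coefficients) using the uniform bounds supplied by bounded geometry, passage to the limit in the cocycle identities on the overlaps $\Omega_{ij}=\psi_{ij}\Omega_{\rho}\cap\Omega_{\rho}$, an appeal to the gluing theorem of \cite{Gallier3}, and verification of the compatibility relation (\ref{eq:gluing-R}) for the limiting metric. Your added emphasis on the uniformity in both $k$ and the chart index $i$, and on asserting the limiting identities only on the possibly shrunken overlap domains, is a faithful (and slightly more explicit) rendering of the same proof.
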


\end{document}